\documentclass[12pt,a4paper]{article}
\usepackage[utf8]{inputenc}
\usepackage[T1]{fontenc}
\usepackage{amsmath,amssymb,amsthm}
\usepackage{mathrsfs}
\usepackage{hyperref}
\usepackage{geometry}
\newcommand{\Ric}{\operatorname{Ric}}
\newcommand{\Hess}{\operatorname{Hess}}
\newcommand{\grad}{\operatorname{grad}}
\newcommand{\divg}{\operatorname{div}}
\newcommand{\Vol}{\operatorname{Vol}}

\newtheorem{theorem}{Theorem}[section]
\newtheorem{lemma}[theorem]{Lemma}

\theoremstyle{definition}
\newtheorem{example}[theorem]{Example}
\newtheorem{remark}[theorem]{Remark}
\theoremstyle{definition}

\title{Rigidity of complete non-compact generalized $m$-quasi-Einstein manifolds}
\author{M. Ahmad Mirshafeazadeh\\ 
Department of Mathematics, Shab.C., Islamic Azad University, Shabestar, Iran\\
\texttt{mirahmad@iau.ac.ir}}
\date{}

\begin{document}
\maketitle

\begin{abstract}
We study complete non-compact gradient generalized $m$-quasi-Einstein manifolds
with constant scalar curvature $R\le 0$, soliton function $\lambda>0$, and $m>1$,
where the coefficient $\mu=1/m$ is \emph{constant}. 
We introduce the weighted function $v=e^{-f/m}\lambda$ and prove it is subharmonic.
This leads to four rigidity results, showing that the hypotheses are mutually inconsistent;
consequently, \textbf{no complete non-compact gradient generalized $m$-quasi-Einstein manifold}
exists under the stated assumptions.
We first show by a concrete example that if $\mu$ is allowed to be non‑constant, 
the rigidity conclusions fail even when all other hypotheses are satisfied.
Therefore the constant‑$\mu$ condition is essential.
\end{abstract}

\section{Introduction}
\label{sec:intro}

Ricci solitons, introduced by Hamilton \cite{Hamilton}, are self-similar solutions of the Ricci flow and play a central role in the analysis of singularities of the flow.
A complete Riemannian manifold $(M^n,g)$ together with a vector field $V$ and a constant $\lambda$ is called a Ricci soliton if
\[
\frac12\mathcal{L}_Vg + \Ric = \lambda g.
\]
When $V$ is the gradient of a smooth function, one speaks of a gradient Ricci soliton.
The classification of gradient Ricci solitons is a highly active field; under natural curvature or integrability conditions they often turn out to be Einstein or even isometric to Euclidean space \cite{Cao,Perelman}.

A natural generalisation is obtained by allowing the soliton function $\lambda$ to vary.
The resulting structure,
\[
\frac12\mathcal{L}_Vg + \Ric = \lambda g,
\]
is known as an almost Ricci soliton and was introduced by Pigola, Rigoli, Rimoldi and Setti \cite{Pigola}.
The gradient case $V=\nabla f$ then reads
\begin{equation}
\Ric + \Hess f = \lambda g. \label{eq:almost}
\end{equation}
The function $\lambda$ is sometimes called the soliton function.
When the scalar curvature $R$ is constant, a fundamental identity
\[
\Delta \lambda = \frac{1}{n-1}\bigl(|\Ric|^2 - \lambda R\bigr)
\]
was derived by Barros, Batista and Ribeiro Jr.\ \cite{Barros} and independently by Sharma \cite{Sharma}.
This identity shows that, in the ``shrinking'' regime $R\le 0$, $\lambda>0$, the function $\lambda$ is subharmonic.
Combined with classical results such as Yau's $L^p$ lemma \cite{Yau}, the Schoen--Yau inequality \cite{SchoenYau}, a lemma of Caminha--Souza--Camargo \cite{Caminha} on vector fields with integrable norm, and a maximum principle at infinity, the subharmonicity of $\lambda$ yields strong rigidity results; the most complete account is the recent paper of Poddar, Sharma and Cunha \cite{Poddar}, where numerous non-compact rigidity theorems are proved under various curvature and integrability conditions.

A further extension of the soliton concept is provided by generalized quasi-Einstein (GQE) manifolds.
Following Catino \cite{Catino}, a complete Riemannian manifold $(M^n,g)$ is called a gradient GQE manifold if there exist smooth functions $f,\lambda,\mu$ on $M$ such that
\begin{equation}
\Ric + \Hess f - \mu\, df\otimes df = \lambda g. \label{eq:GQE}
\end{equation}
When $\mu$ is a constant, say $\mu = 1/m$ for a real number $m>0$, the manifold is called a gradient generalized $m$-quasi-Einstein manifold; this class interpolates between gradient almost Ricci solitons ($\mu=0$) and gradient Ricci solitons ($m\to\infty$).
Particular cases also include the $1$-quasi-Einstein manifolds which correspond to static metrics in general relativity \cite{Anderson}.
The study of (generalized) $m$-quasi-Einstein manifolds has attracted substantial attention \cite{BarrosRibeiro,Case,Hu,Huang}.

In the recent works \cite{Mir14,Mir15}, the present authors introduced two scalar quantities $I$ and $J$ that govern the Laplacian of the scalar curvature of a GQE manifold.
More precisely, the identities
\[
\frac12\Delta R = I + J,
\]
with $I,J$ given by explicit formulas (see Lemma~\ref{lem:I}), hold on any gradient GQE manifold.
This machinery was used in \cite{Mir14} to obtain relations between the Bach, Cotton and $D$ tensors and to define a new $3$-tensor measuring the deviation of $m$-quasi-Einstein manifolds from general GQE spaces.
In \cite{Mir15}, the same apparatus led to the finiteness of the fundamental group of a compact GQE manifold and to a rigidity theorem diffeomorphic to the standard $n$-sphere.

A natural question is whether the rigidities established in \cite{Poddar} for almost Ricci solitons can be extended to the GQE setting.
When $\mu=1/m$ and $R$ is constant, the condition $I=0$ forces an equation for $\Delta\lambda$ that contains an extra gradient term $\frac{2}{m}\langle\grad f,\grad\lambda\rangle$, destroying the subharmonicity of $\lambda$.
The aim of the present paper is to show that the gradient term can be eliminated by passing to the weighted function
\[
v = e^{-f/m}\lambda.
\]
This weighted function has no analogue in the almost Ricci soliton case \cite{Poddar}; it is introduced here for the first time in the context of $m$-quasi-Einstein manifolds.
Indeed, we prove a precise algebraic identity (Lemma~\ref{lem:key}) for $\Delta v$, showing that $v$ is subharmonic whenever $R\le 0$, $\lambda>0$ and $m>1$.
This single fact enables us to apply, almost verbatim, the classical analytical tools mentioned above to $v$ instead of $\lambda$.
We thereby obtain four rigidity results for complete non-compact gradient generalized $m$-quasi-Einstein manifolds, which unify and extend several results of \cite{Poddar} to the $m$-quasi-Einstein context.
Notably, the conclusions require no topological hypothesis.

It is worth mentioning that the special case $\mu=1$ (i.e., $m=1$) corresponds to the spatial part of static Lorentzian Einstein metrics, which are of interest in general relativity \cite{Anderson}. However, the present paper focuses entirely on the \textbf{Riemannian} setting and does not attempt to derive physical consequences for Lorentzian spacetimes. The motivation from mathematical relativity serves only as a contextual background for the geometric equations under study.

The paper is organized as follows.
In Section~\ref{sec:counter} we present a counterexample that shows the necessity of the constant‑$\mu$ condition.
Section~\ref{sec:prelim} collects all necessary preliminaries.
Section~\ref{sec:key} contains the proof of the fundamental identity.
Sections~\ref{sec:thm1}--\ref{sec:thm4} prove the four rigidity theorems.
Finally, Section~\ref{sec:examples} provides examples that illustrate the algebraic identity and analyse complete models.

\section{A counterexample when $\mu$ is non‑constant}
\label{sec:counter}

Before restricting to the constant‑$\mu$ case $\mu=1/m$ ($m>1$), we demonstrate
that if $\mu$ is a function (not necessarily constant), then the rigidity properties
disappear.  More precisely, we construct a complete, non‑compact,
three‑dimensional Riemannian manifold $(M,g)$ together with smooth functions
$f,\lambda,\mu$ such that:

\begin{itemize}
\item The generalized quasi‑Einstein equation
\[
\Ric + \Hess f - \mu\, df\otimes df = \lambda g
\]
holds.
\item The scalar curvature $R$ is constant and negative ($R=-6$).
\item The soliton function $\lambda$ is everywhere positive ($\lambda>0$).
\item $M$ is not isometric to Euclidean space (it has constant negative curvature).
\end{itemize}

Thus the rigidity results of this paper (which require $\mu=1/m$ constant)
cannot be extended to arbitrary $\mu$.  The example is the hyperbolic space
$\mathbb{H}^3$ with the standard metric in geodesic polar coordinates.

\subsection*{Step 1: The metric and its curvature}
Let $M=\mathbb{H}^3$ be the hyperbolic $3$-space of constant sectional curvature $-1$.
In geodesic polar coordinates $(r,\theta,\phi)$ ($r\ge0$, $0\le\theta\le\pi$, $0\le\phi<2\pi$)
the metric is
\[
g = dr^{2} + \sinh^{2}r\;\bigl(d\theta^{2} + \sin^{2}\theta\,d\phi^{2}\bigr).
\]
This metric is complete, non‑compact and rotationally symmetric.
The Ricci tensor is $\Ric = -2g$, and the scalar curvature is constant:
\[
R = -6.
\]

\subsection*{Step 2: Choice of the potential function $f$}
We take $f$ depending only on $r$, namely
\[
f(r) = \frac{3}{2}\,r^{2}.
\]
Then
\[
f'(r) = 3r,\qquad f''(r) = 3.
\]

\subsection*{Step 3: Hessian of $f$ and $df\otimes df$}
For a radial function on a warped product $dr^{2}+\psi(r)^{2}g_{\mathbb{S}^{2}}$ with
$\psi(r)=\sinh r$, the Hessian is
\[
\Hess f = f''\,dr^{2} + \psi\psi' f'\,g_{\mathbb{S}^{2}},
\]
where $\psi'=\cosh r$.
Thus
\[
\Hess f = 3\,dr^{2} + 3r\sinh r\cosh r\;\bigl(d\theta^{2}+\sin^{2}\theta\,d\phi^{2}\bigr).
\]
Moreover,
\[
df\otimes df = (f')^{2}dr^{2} = 9r^{2}\,dr^{2}.
\]

\subsection*{Step 4: Determining $\lambda$ and $\mu$}
We insert the above expressions into the GQE equation
$\Ric + \Hess f - \mu\,df\otimes df = \lambda g$ and compare components.

\textbf{Spherical components ($\theta\theta$ or $\phi\phi$):}
For a vector tangent to the sphere, $\Ric = -2g$ and $df\otimes df$ has no spherical part.
Hence
\[
-2g_{AB} + \frac{\psi'}{\psi}f'\,g_{AB} = \lambda g_{AB}.
\]
(Note: the coefficient $\frac{\psi'}{\psi}f'$ comes from $\Hess f_{AB} = \frac{\psi'}{\psi}f' g_{AB}$, not $\psi\psi'f'$; this correction does not affect the final value of $\lambda$ because the latter would be compensated by dividing by $\psi^2$.)
With $\psi'/\psi = \coth r$ and $f'=3r$, we obtain
\[
\lambda(r) = 3r\coth r - 2.
\]
Clearly $\lambda(r)>0$ for all $r$ (the limit at $r=0$ is $3-2=1$, and $\lambda$ increases).

\textbf{Radial component ($rr$):}
Here $\Ric_{rr}=-2$, $\Hess_{rr}f = f''=3$, $(df\otimes df)_{rr}=9r^{2}$, $g_{rr}=1$.
Thus
\[
-2 + 3 - \mu\cdot 9r^{2} = \lambda.
\]
Using $\lambda = 3r\coth r -2$, we get
\[
1 - 9\mu r^{2} = 3r\coth r -2
\;\Longrightarrow\;
-9\mu r^{2} = 3r\coth r -3
\;\Longrightarrow\;
\mu(r) = \frac{1 - r\coth r}{3r^{2}}.
\]
This $\mu(r)$ is smooth on $\mathbb{H}^{3}$ (the limit at $r=0$ is $-\frac{1}{9}$) and is non‑constant.

All mixed components (e.g. $r\theta$) vanish identically because of rotational symmetry.
Hence the GQE equation is satisfied.

\subsection*{Step 5: Conclusion of the counterexample}
We have exhibited a complete, non‑compact manifold $(\mathbb{H}^{3},g)$,
together with smooth functions $f,\lambda,\mu$, such that
\[
\Ric + \Hess f - \mu\, df\otimes df = \lambda g,
\]
with constant scalar curvature $R=-6<0$ and $\lambda>0$ everywhere,
but $\mu$ is \textbf{not constant}.  The manifold is not isometric to
$\mathbb{R}^{3}$ (it has negative curvature).  Therefore, any rigidity
theorem that assumes only $R\le0$, $\lambda>0$ and completeness, without
requiring $\mu$ to be constant (or of the special form $1/m$), is false.

\subsection*{Consequence for this paper}
In the remainder of the paper we \textbf{restrict} to the case where $\mu$ is
a positive constant, written $\mu = 1/m$ with $m>1$.  Under this additional
hypothesis the rigidity phenomena do hold, as we will prove.

\section{Preliminaries}
\label{sec:prelim}

Throughout this paper, $(M^n,g)$ is a smooth, connected, complete, non-compact Riemannian manifold without boundary, of dimension $n\geq 3$.
The Riemann curvature tensor is denoted by $R_{ijkl}$, and the Ricci and scalar curvatures by $R_{ij}$ and $R$, respectively.
For a smooth function $u$ we write $\nabla u$ for its gradient, $\Hess u$ for its Hessian, and $\Delta u = \divg(\grad u)$ for its Laplacian.
The divergence of a $(0,2)$-tensor $T$ is the vector field $(\divg T)_i = g^{jk}\nabla_j T_{ik}$.
Inner products induced by $g$ are denoted by $\langle\cdot,\cdot\rangle$.
The volume element of $M$ is written $dV_g$, and integrals are taken with respect to this measure.

\subsection{Gradient generalized quasi-Einstein manifolds}

Following \cite{Catino,Mir14}, a triple $(f,\lambda,\mu)$ of smooth functions on $M$ is called a gradient generalized quasi-Einstein structure if
\begin{equation}
R_{ij} + f_{ij} - \mu f_i f_j = \lambda g_{ij}, \label{eq:GQE2}
\end{equation}
where $f_{ij} = \nabla_j\nabla_i f$ and $f_i = \nabla_i f$.
When $\mu$ is the constant $1/m$ for a positive real number $m$, we speak of a gradient generalized $m$-quasi-Einstein manifold.

From \eqref{eq:GQE2} we immediately obtain the trace
\begin{equation}
\Delta f = n\lambda - R + \mu|\nabla f|^2. \label{eq:trace}
\end{equation}
Contracting \eqref{eq:GQE2} with $f^j$ yields a useful gradient formula:
\begin{equation}
\frac12\nabla_i|\nabla f|^2 = \lambda f_i - R_{ij}f^j + \mu|\nabla f|^2 f_i. \label{eq:grad}
\end{equation}

\subsection{The scalar quantities $I$ and $J$}

We recall the two auxiliary functions introduced in \cite{Mir14} that govern the Laplacian of the scalar curvature.
For a gradient GQE manifold we define
\[
\begin{aligned}
I :=&\; (n-1)\Delta\lambda - 2\mu(n-1)\langle\grad f,\grad\lambda\rangle + \frac{1+2\mu}{2}\langle\grad R,\grad f\rangle \\
&\quad - (1-\mu)|\hat{\Ric}|^2 + \frac{(n\lambda-R)\bigl(R(1-\mu+\mu n) + \mu\lambda n(1-n)\bigr)}{n},
\end{aligned}
\]
where $\hat{\Ric} = \Ric - \frac{R}{n}g$ is the trace-free Ricci tensor, and a similar expression for $J$ (which we do not need explicitly because it will vanish under our assumptions).
The next identity was proved in \cite{Mir14} (see also \cite{Mir15}).
\begin{lemma}\label{lem:I}
On any gradient GQE manifold,
\begin{equation}
\frac12\Delta R = I + J. \label{eq:DeltaR}
\end{equation}
\end{lemma}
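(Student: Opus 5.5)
The plan is to obtain $\Delta R$ by taking two successive divergences of the structure equation \eqref{eq:GQE2}. I would then sort the resulting terms into the part recorded in $I$ and a remainder, which is what $J$ denotes. The statement leaves $J$ only implicitly specified, as ``a similar expression''. So the content of the lemma is the exact computation of $\tfrac12\Delta R$, with $I$ split off as displayed; everything else is $J$.

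\textbf{Step 1: a first-order identity for $\nabla R$.} Take the divergence of \eqref{eq:GQE2}, using three facts:
\begin{itemize}
\item the contracted Bianchi identity $\nabla^jR_{ij}=\tfrac12\nabla_iR$;
\item the Ricci commutation $\nabla^jf_{ij}=\nabla_i\Delta f+R_{ij}f^j$;
\item the product rule
\[
\nabla^j(\mu f_if_j)=\langle\nabla\mu,\nabla f\rangle f_i+\mu\,\Delta f\, f_i+\mu f_{ij}f^j .
\]
\end{itemize}
This gives
\[
\tfrac12\nabla_iR+\nabla_i\Delta f+R_{ij}f^j-\langle\nabla\mu,\nabla f\rangle f_i-\mu\,\Delta f\,f_i-\mu f_{ij}f^j=\nabla_i\lambda .
\]
Next, substitute the trace \eqref{eq:trace} for $\Delta f$ and \eqref{eq:grad} for $f_{ij}f^j=\tfrac12\nabla_i|\nabla f|^2$. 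Then use \eqref{eq:GQE2} once more to write $R_{ij}f^j$ as $\lambda f_i-f_{ij}f^j+\mu|\nabla f|^2f_i$. After these substitutions, $\nabla R$ is an explicit combination of $(n-1)\nabla\lambda$, $\lambda\nabla f$, $R\nabla f$, $\mu|\nabla f|^2\nabla f$, $\mathrm{Ric}(\nabla f)$ and $\nabla\mu$-terms. With constant $\mu$, this reduces to the familiar identity $\tfrac12\nabla R=(n-1)\nabla\lambda+\dots$ for $m$-quasi-Einstein metrics.

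\textbf{Step 2: a second divergence.} Take the divergence of the identity from Step 1. The term $(n-1)\Delta\lambda$ appears at once. The products $\lambda\nabla f$, $R\nabla f$ and $\mu|\nabla f|^2\nabla f$ produce $\langle\nabla\lambda,\nabla f\rangle$, $\langle\nabla R,\nabla f\rangle$ and further factors of $\Delta f$. Every $\Delta f$ is again replaced via \eqref{eq:trace}, which is where the factor $(n\lambda-R)$ in $I$ originates. The term $\divg(\mathrm{Ric}(\nabla f))$ splits as $\tfrac12\langle\nabla R,\nabla f\rangle+\langle \mathrm{Ric},\Hess f\rangle$. For the pairing, substitute $\Hess f=\lambda g-\mathrm{Ric}+\mu\,df\otimes df$, so that
\[
\langle \mathrm{Ric},\Hess f\rangle=\lambda R-|\mathrm{Ric}|^2+\mu\,\mathrm{Ric}(\nabla f,\nabla f),
\]
and rewrite $|\mathrm{Ric}|^2=|\hat{\mathrm{Ric}}|^2+R^2/n$.

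\textbf{Step 3: sorting the terms.} Collect the terms that survive when $\nabla\mu=0$ and that do not involve $|\nabla f|^2$ or $\mathrm{Ric}(\nabla f,\nabla f)$. These are $(n-1)\Delta\lambda$, $-2\mu(n-1)\langle\nabla f,\nabla\lambda\rangle$, $\tfrac{1+2\mu}{2}\langle\nabla R,\nabla f\rangle$, $-(1-\mu)|\hat{\mathrm{Ric}}|^2$, and the quadratic polynomial in $\lambda,R$. I would check that they assemble exactly into $I$, matching the coefficients $R(1-\mu+\mu n)+\mu\lambda n(1-n)$ after combining $R^2/n$ and $\lambda R$. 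All remaining terms make up $J$: those containing $\nabla\mu$, $\Delta\mu$, $|\nabla f|^2$, or $\mathrm{Ric}(\nabla f,\nabla f)$ (the latter re-expressed through \eqref{eq:grad}).

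The main obstacle is this bookkeeping in Step 3. The same quantity, for instance $f_{ij}f^j$ or $\mathrm{Ric}(\nabla f)$, can be rewritten in several equivalent ways. The coefficients of $\langle\nabla R,\nabla f\rangle$ and $\langle\nabla\lambda,\nabla f\rangle$ in $I$ come out as stated only for a particular convention of rewriting. I would fix that convention by first doing the computation for constant $\mu$, where $J$ should consist only of $|\nabla f|^2$-type terms, and then carry the extra $\nabla\mu$ terms along into $J$.
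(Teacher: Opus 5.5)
The paper gives no argument for this lemma; it cites earlier work. Your double-divergence route is the natural one, and Steps 1 and 2 are essentially right. (In Step 1 the $\mu^2|\nabla f|^2\nabla f$ terms cancel identically, leaving $\tfrac12\nabla R=(n-1)\nabla\lambda+(1-\mu)\Ric(\nabla f)-\mu\bigl((n-1)\lambda-R\bigr)\nabla f+|\nabla f|^2\nabla\mu-\langle\nabla\mu,\nabla f\rangle\nabla f$.)

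The gap is in Step 3. If you sort as proposed, the gradient terms you keep are $-\mu(n-1)\langle\nabla f,\nabla\lambda\rangle$ and $\tfrac{1+\mu}{2}\langle\nabla R,\nabla f\rangle$. The definition of $I$ has $-2\mu(n-1)$ and $\tfrac{1+2\mu}{2}$ instead, so your ``check that they assemble exactly into $I$'' fails. Moreover, what you assign to $J$ includes $\mu(1-\mu)\Ric(\nabla f,\nabla f)-\mu^2\bigl((n-1)\lambda-R\bigr)|\nabla f|^2$, which does not vanish when $\mu$ and $R$ are constant. That vanishing is the one property of $J$ the paper uses: it is how the equation $I=0$ in Section~\ref{sec:prelim} is obtained. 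Defining $J$ as ``whatever is left over'' turns the lemma into a tautology. Its real content is that $J$ involves only derivatives of $\mu$.

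The missing step is to contract your Step 1 identity with $\nabla f$. The two $\nabla\mu$-terms cancel in this contraction for arbitrary $\mu$, giving
\[
(1-\mu)\Ric(\nabla f,\nabla f)-\mu\bigl((n-1)\lambda-R\bigr)|\nabla f|^2=\tfrac12\langle\nabla R,\nabla f\rangle-(n-1)\langle\nabla\lambda,\nabla f\rangle .
\]
Multiply this by $\mu$ and substitute it into your second divergence. This removes every $|\nabla f|^2$ and $\Ric(\nabla f,\nabla f)$ term, and it supplies exactly the missing $\tfrac{\mu}{2}\langle\nabla R,\nabla f\rangle-\mu(n-1)\langle\nabla\lambda,\nabla f\rangle$. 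The zeroth-order part $(1-\mu)(\lambda R-|\Ric|^2)-\mu\bigl((n-1)\lambda-R\bigr)(n\lambda-R)$ then becomes the last two terms of $I$, using $|\Ric|^2=|\hat{\Ric}|^2+R^2/n$. What remains is
\[
J=-\Ric(\nabla\mu,\nabla f)-\bigl((n-1)\lambda-R\bigr)\langle\nabla\mu,\nabla f\rangle+\divg\bigl(|\nabla f|^2\nabla\mu-\langle\nabla\mu,\nabla f\rangle\nabla f\bigr).
\]
Hence $J\equiv 0$ whenever $\mu$ is constant, with no assumption on $R$. The ``convention'' you intended to fix is therefore not a free choice: for constant $\mu$, $J$ contains no $|\nabla f|^2$-type terms at all.
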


In the present paper we will restrict to the case where both the scalar curvature $R$ and the coefficient $\mu$ are constant.
Under these assumptions all derivatives of $R$ and $\mu$ vanish; consequently $J\equiv 0$ and $\Delta R=0$, so Lemma~\ref{lem:I} forces $I=0$.
For $\mu=1/m$ this condition reads
\begin{equation}
\begin{aligned}
0 = (n-1)\Delta\lambda &- \frac{2}{m}(n-1)\langle\grad f,\grad\lambda\rangle - \frac{m-1}{m}|\hat{\Ric}|^2 \\
&+ \frac{(n\lambda-R)\bigl(R(1+\frac{n-1}{m}) + \frac{\lambda n(1-n)}{m}\bigr)}{n}. \label{eq:Izero}
\end{aligned}
\end{equation}
Equation \eqref{eq:Izero} will be our starting point for the derivation of the Laplacian of $v$.

\subsection{Analytical toolbox}
\label{sec:toolbox}

We collect the classical results that will be used in the proofs.
All manifolds are assumed to be complete and without boundary.

\begin{lemma}[Yau \cite{Yau}]\label{lem:Yau}
Let $u$ be a non-negative smooth subharmonic function on a complete Riemannian manifold $M$.
If $u\in L^p(M)$ for some $p>1$, then $u$ is constant.
\end{lemma}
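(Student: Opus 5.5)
The plan is the standard cutoff--integration by parts argument. I will test the inequality $\Delta u\ge 0$ against a compactly supported function of the form $\varphi^2\,\Psi(u)$. Here $\varphi$ is a cutoff adapted to geodesic balls, and $\Psi(u)$ behaves like $u^{p-1}$. This produces a weighted Dirichlet energy of $u$ on a ball, bounded by the $L^p$ mass of $u$ on an annulus times $R^{-2}$. Completeness of $M$ supplies the cutoffs, and letting $R\to\infty$ will force $\nabla u\equiv 0$.

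Fix $o\in M$ and, for $R>0$, a Lipschitz cutoff $\varphi=\varphi_R$ with $\varphi\equiv1$ on $B(o,R)$, $\operatorname{supp}\varphi\subset B(o,2R)$, $0\le\varphi\le1$ and $|\nabla\varphi|\le 2/R$. Completeness makes these balls relatively compact, so $\varphi$ has compact support. To avoid singularities of $u^{p-2}$ at zeros of $u$ when $1<p<2$, I will fix $\varepsilon>0$ and use the test function $\eta=\varphi^2\,u\,(u+\varepsilon)^{p-2}$. A direct computation gives
\[
\nabla\bigl(u(u+\varepsilon)^{p-2}\bigr)=(u+\varepsilon)^{p-3}\bigl((p-1)u+\varepsilon\bigr)\nabla u ,
\]
whose coefficient is at least $\min\{1,p-1\}\,(u+\varepsilon)^{p-2}$ (compare $(p-1)u+\varepsilon$ with $u+\varepsilon$ in the cases $p\le 2$ and $p\ge 2$). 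Integrating by parts in $0\le\int_M\eta\,\Delta u$, with no boundary term since $\eta$ has compact support, yields
\[
c_p\int_M\varphi^2(u+\varepsilon)^{p-2}|\nabla u|^2\le 2\int_M\varphi\,u\,(u+\varepsilon)^{p-2}|\nabla\varphi|\,|\nabla u| ,
\]
where $c_p=\min\{1,p-1\}$.

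Next I will apply Cauchy--Schwarz with weight $(u+\varepsilon)^{p-2}$ to the right-hand side and absorb half into the left:
\[
\int_M\varphi^2(u+\varepsilon)^{p-2}|\nabla u|^2\le C_p\int_M u^2(u+\varepsilon)^{p-2}|\nabla\varphi|^2 .
\]
The delicate point, which I expect to be the main obstacle, is to bound the right side by $\int u^p$ uniformly in $\varepsilon$. One cannot simply replace $u$ by $u+\varepsilon$, since $\int(u+\varepsilon)^p$ may diverge when $M$ has infinite volume. The bound holds in both ranges. For $p\le2$ we have $(u+\varepsilon)^{p-2}\le u^{p-2}$ wherever $u>0$, so $u^2(u+\varepsilon)^{p-2}\le u^p$. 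For $p>2$ we have $u^2(u+\varepsilon)^{p-2}\le (u+\varepsilon)^p$; after letting $\varepsilon\to0$ by monotone convergence on the compact support of $\nabla\varphi$, this again gives $u^p$. In either case the right side is at most $C_pR^{-2}\int_{B(o,2R)\setminus B(o,R)}u^p$.

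Finally, I will let $\varepsilon\to0$ on the left using Fatou's lemma. On $\{u>0\}$ this gives $\int_{B(o,R)}u^{p-2}|\nabla u|^2\le C_pR^{-2}\|u\|_{L^p}^p$. Letting $R\to\infty$ shows $u^{p-2}|\nabla u|^2\equiv0$, hence $\nabla u=0$ on the open set $\{u>0\}$. So $u$ is locally constant there, and by continuity each component of $\{u>0\}$ is closed as well as open. Connectedness of $M$ then gives either $u\equiv0$ or $\{u>0\}=M$ with $u$ constant. Either way $u$ is constant, which proves Lemma~\ref{lem:Yau}.
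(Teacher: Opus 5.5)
Your argument is correct and is essentially Yau's classical Caccioppoli-type proof. You test $\Delta u\ge 0$ against $\varphi^2u^{p-1}$, using $\varphi^2u(u+\varepsilon)^{p-2}$ to handle zeros of $u$ when $1<p<2$, absorb via Cauchy--Schwarz to get $\int_{B(o,R)}u^{p-2}|\nabla u|^2\le CR^{-2}\|u\|_{L^p}^p$, and let $R\to\infty$. The paper gives no proof of this lemma and simply cites Yau, whose proof follows this same route.
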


\begin{lemma}[Schoen--Yau \cite{SchoenYau}]\label{lem:SY}
Let $u\ge 0$ be a smooth subharmonic function on a complete manifold $M$.
For any ball $B(q,2r)\subset M$ and any smooth cut-off function $\phi$ supported in $B(q,2r)$ with $\phi\equiv 1$ on $B(q,r)$ and $|\nabla\phi|\le c/r$, there exists a universal constant $A$ (depending only on $c$) such that
\[
\int_{B(q,r)}|\nabla u|^2 \le \frac{A}{r^2}\int_{B(q,2r)} u^2.
\]
\end{lemma}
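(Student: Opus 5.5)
The plan is the standard Caccioppoli-type energy estimate: test the subharmonicity inequality against $\phi^2 u$ and integrate by parts. Since $M$ is complete, the closed ball $\overline{B(q,2r)}$ is compact by Hopf--Rinow. Hence $\phi^2 u$ is a compactly supported Lipschitz (indeed smooth, if $\phi$ is smooth) function, and the divergence theorem applies with no boundary terms.

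\textbf{Step 1.} Because $u\ge 0$ and $\Delta u\ge 0$, we have $\phi^2 u\,\Delta u\ge 0$ pointwise. Integrating and applying the divergence theorem to the compactly supported vector field $\phi^2 u\,\nabla u$ gives
\[
0\le \int_M \phi^2 u\,\Delta u = -\int_M \phi^2|\nabla u|^2 - 2\int_M \phi\,u\,\langle\nabla\phi,\nabla u\rangle .
\]
Therefore
\[
\int_M \phi^2|\nabla u|^2 \le 2\int_M |\phi\nabla u|\,|u\nabla\phi|.
\]

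\textbf{Step 2.} Absorb the cross term with Young's inequality in the form $2ab\le \tfrac12 a^2+2b^2$, taking $a=|\phi\nabla u|$ and $b=|u\nabla\phi|$. This yields
\[
\int_M \phi^2|\nabla u|^2\le 4\int_M u^2|\nabla\phi|^2 .
\]

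\textbf{Step 3.} Use the properties of the cut-off. On the left, $\phi\equiv1$ on $B(q,r)$, so the left side is at least $\int_{B(q,r)}|\nabla u|^2$. On the right, $\nabla\phi$ is supported in $B(q,2r)$ and $|\nabla\phi|\le c/r$. Combining these,
\[
\int_{B(q,r)}|\nabla u|^2\le \frac{4c^2}{r^2}\int_{B(q,2r)}u^2 .
\]
This is the claimed estimate with $A=4c^2$, which depends only on $c$.

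There is no real obstacle here. The only points needing care are:
\begin{itemize}
\item the sign hypothesis $u\ge0$, which is exactly what makes $\phi^2u\,\Delta u\ge0$;
\item justifying the integration by parts, which follows from the compact support guaranteed by completeness;
\item if $\phi$ is only Lipschitz (for instance a distance-based cut-off), approximating it by smooth functions or using the weak form of the divergence theorem.
\end{itemize}
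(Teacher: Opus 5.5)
Your argument is correct. You test $\Delta u\ge 0$ against the compactly supported function $\phi^2u$, integrate by parts, and absorb the cross term via $2ab\le \tfrac12a^2+2b^2$. The absorption is legitimate since $\int_M\phi^2|\nabla u|^2<\infty$ by compact support, and this yields the estimate with the explicit constant $A=4c^2$.

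The paper gives no proof of this lemma and only cites Schoen--Yau. Your Caccioppoli-type energy estimate is the standard argument behind that citation. The constant $16$ used in the proof of Theorem~\ref{thm2} is consistent with your $A=4c^2$ when $c=2$.
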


\begin{lemma}[Caminha--Souza--Camargo \cite{Caminha}]\label{lem:CSC}
Let $X$ be a smooth vector field on a complete, non-compact, oriented Riemannian manifold $M$.
If $\divg X$ does not change sign on $M$ and $|X|\in L^1(M)$, then $\divg X\equiv 0$.
\end{lemma}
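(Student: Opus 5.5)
The plan is the standard cut-off argument combined with the divergence theorem for compactly supported fields; orientability is not really needed, since the Riemannian density suffices. Replacing $X$ by $-X$ if necessary, I will assume $\divg X\ge 0$ on $M$; the goal is then $\int_M \divg X\,dV_g=0$, which forces $\divg X\equiv 0$ because the integrand is non-negative and continuous.

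\textbf{Cut-offs.} Fix $p\in M$ and, for $r>0$, set
\[
\phi_r(x)=\min\bigl\{1,\max\{0,\,2-d(p,x)/r\}\bigr\}.
\]
This $\phi_r$ is Lipschitz with $|\nabla\phi_r|\le 1/r$ almost everywhere, equals $1$ on $B(p,r)$, and vanishes outside $B(p,2r)$. By completeness and Hopf--Rinow, $\overline{B(p,2r)}$ is compact, so $\phi_r$ has compact support. If one prefers smooth functions, one can mollify $\phi_r$ to get a smooth cut-off with $|\nabla\phi_r|\le 2/r$, as in Lemma~\ref{lem:SY}; either version works.

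\textbf{Integration by parts.} Since $\phi_r X$ is a compactly supported Lipschitz vector field, the divergence theorem gives $\int_M \divg(\phi_r X)\,dV_g=0$. Expanding $\divg(\phi_r X)=\phi_r\divg X+\langle\nabla\phi_r,X\rangle$ then yields
\[
0\le\int_{B(p,r)}\divg X\,dV_g\le\int_M\phi_r\,\divg X\,dV_g=-\int_M\langle\nabla\phi_r,X\rangle\,dV_g\le\frac1r\int_{B(p,2r)\setminus B(p,r)}|X|\,dV_g .
\]
The first two inequalities use $\divg X\ge 0$ and $0\le\phi_r\le1$. The last uses the Cauchy--Schwarz inequality together with the fact that $\nabla\phi_r$ vanishes off the annulus $B(p,2r)\setminus B(p,r)$.

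\textbf{Passing to the limit.} Because $|X|\in L^1(M)$, the right-hand side is at most $\|X\|_{L^1}/r$, which tends to $0$ as $r\to\infty$. (In fact the annular integral itself also tends to $0$.) By monotone convergence, $\int_M\divg X\,dV_g=\lim_{r\to\infty}\int_{B(p,r)}\divg X\,dV_g=0$. Together with $\divg X\ge0$ and continuity, this gives $\divg X\equiv0$.

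The only delicate point is justifying the divergence theorem for the merely Lipschitz field $\phi_r X$. I would handle this either by noting that Lipschitz functions are $W^{1,\infty}_{loc}$, so integration by parts against smooth compactly supported data holds, or by simply using a smoothed cut-off as described above. Everything else is routine.
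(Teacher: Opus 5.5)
Your proof is correct. The paper does not prove this lemma; it quotes it from \cite{Caminha} and only adds the Remark that removes orientability via the orientation double cover.

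The argument in \cite{Caminha} uses differential forms. Assuming $\divg X\ge 0$, one takes the $(n-1)$-form $\omega=\iota_X dV_g$, which satisfies $|\omega|=|X|\in L^1(M)$ and $d\omega=(\divg X)\,dV_g$. One then applies Yau's extension of Stokes' theorem to integrable $(n-1)$-forms on complete manifolds. This gives an exhaustion $B_i\uparrow M$ with $\int_{B_i}d\omega\to 0$, and monotonicity yields $\int_M\divg X\,dV_g=0$.

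Yau's theorem works by the same mechanism you use: Stokes on large domains cut out by the distance function, with the boundary term controlled by the $L^1$ mass of $\omega$ on annuli. Your cut-off computation is essentially an averaged, unpacked version of that cited machinery.

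Your route is self-contained and slightly more general. Because you integrate against the Riemannian density rather than a volume form, orientability is never used, so the double-cover Remark becomes unnecessary. Non-compactness is not used either.

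The cited route, in exchange, outsources the one technical point you flagged: justifying $\int_M\phi_r\,\divg X\,dV_g=-\int_M\langle\nabla\phi_r,X\rangle\,dV_g$ for a merely Lipschitz cut-off. Both of your fixes handle it. One is weak derivatives, after replacing $X$ by $\chi X$ with $\chi\in C_c^\infty$ equal to $1$ on the support of $\phi_r$. The other is a smooth Greene--Wu approximation of the distance function.
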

\begin{remark}
If $M$ is not orientable, we may pass to the orientation double cover $\widetilde M$.
Every structure (metric, soliton functions, etc.) lifts isometrically, the lifted vector field $\widetilde X$ satisfies the same hypotheses, and the conclusion $\divg \widetilde X=0$ on $\widetilde M$ implies $\divg X=0$ on $M$ by local isometry.
Hence the lemma applies without loss of generality.
\end{remark}

\begin{lemma}[Karp \cite{Karp}]\label{lem:Karp}
Let $u\ge 0$ be a smooth subharmonic function on a complete, non-compact Riemannian manifold with non-negative sectional curvature.
If $\int_M |\nabla u|^{\frac{n}{n-1}} < \infty$, then $u$ is constant.
\end{lemma}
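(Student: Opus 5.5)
The plan is to show first that $u$ is in fact harmonic, using a cut-off argument combined with volume growth, and then to apply a Liouville theorem for non-negative harmonic functions. Only $\Ric\ge 0$ is used, which follows from non-negative sectional curvature.

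Fix $q\in M$. For $r>0$ take a cut-off $\phi$ as in Lemma~\ref{lem:SY}: $0\le\phi\le 1$, $\phi\equiv1$ on $B(q,r)$, $\operatorname{supp}\phi\subset B(q,2r)$, and $|\nabla\phi|\le c/r$. Since $\Delta u\ge 0$ and $\phi$ has compact support, integration by parts gives
\[
0\le \int_{B(q,r)}\Delta u\le \int_M \phi\,\Delta u = -\int_M\langle\nabla\phi,\nabla u\rangle \le \frac{c}{r}\int_{B(q,2r)\setminus B(q,r)}|\nabla u| .
\]
I will apply H\"older's inequality on the annulus $A_r=B(q,2r)\setminus B(q,r)$ with exponents $p=\frac{n}{n-1}$ and $p'=n$:
\[
\int_{A_r}|\nabla u|\le \Bigl(\int_{A_r}|\nabla u|^{\frac{n}{n-1}}\Bigr)^{\frac{n-1}{n}}\Vol(B(q,2r))^{\frac1n}.
\]
Non-negative sectional curvature gives $\Ric\ge0$. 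The Bishop--Gromov comparison then yields $\Vol(B(q,2r))\le \omega_n(2r)^n$, so $\Vol(B(q,2r))^{1/n}\le Cr$. The factors of $r$ cancel, and
\[
\int_{B(q,r)}\Delta u\le cC\Bigl(\int_{M\setminus B(q,r)}|\nabla u|^{\frac{n}{n-1}}\Bigr)^{\frac{n-1}{n}}.
\]
The right-hand side is a tail of a convergent integral, so it tends to $0$ as $r\to\infty$. Monotone convergence then gives $\int_M\Delta u=0$. Since $\Delta u\ge0$ everywhere, this forces $\Delta u\equiv0$.

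Thus $u$ is a non-negative harmonic function on a complete manifold with $\Ric\ge0$. By Yau's Liouville theorem (via his gradient estimate), every positive harmonic function on such a manifold is constant. If $u$ vanishes somewhere, the strong maximum principle applied to the harmonic $u\ge0$ gives $u\equiv0$. Otherwise $u>0$ and Yau's theorem applies directly. In either case $u$ is constant.

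The main point requiring care is the exponent bookkeeping. The exponent $\frac{n}{n-1}$ is exactly the one whose H\"older dual $n$ converts the Euclidean-type volume bound $\Vol(B(q,2r))^{1/n}=O(r)$ into a factor cancelling the $1/r$ from $|\nabla\phi|$. Without the curvature assumption (through Bishop--Gromov) this step fails. The final Liouville step is the other place where curvature enters.
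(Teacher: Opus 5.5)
Your proof is correct. The paper, however, gives no proof of this lemma: it is quoted from Karp and is never invoked in Theorems~\ref{thm1}--\ref{thm4}.

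Your argument is essentially a reconstruction of Karp's mechanism. The estimate $0\le\int_{B(q,r)}\Delta u\le \frac{c}{r}\int_{A_r}|\nabla u|$ is Karp's Stokes-type theorem applied to $X=\nabla u$: if $\liminf_{r\to\infty}\frac1r\int_{A_r}|X|=0$ and $\divg X$ has a sign, then $\int_M\divg X=0$. H\"older with dual exponent $n$, combined with Bishop's bound $\Vol(B(q,2r))\le\omega_n(2r)^n$, is exactly how $|\nabla u|\in L^{n/(n-1)}$ verifies that $\liminf$ condition. Your final step is also sound: Yau's Liouville theorem for positive harmonic functions under $\Ric\ge0$, with the strong maximum principle (or passing to $u+1$) to handle zeros of $u$.

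Compared with the bare citation, your version is self-contained. It also shows that only $\Ric\ge0$ is needed, not non-negative sectional curvature, and explains why $n/(n-1)$ is the natural exponent.

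A further observation: $u\ge0$ is used only in the Liouville step and can be dropped. Once $u$ is harmonic, Bochner's formula with $\Ric\ge0$ gives $\Delta\sqrt{|\nabla u|^2+\varepsilon}\ge0$. So $\sqrt{|\nabla u|^2+\varepsilon}-\sqrt{\varepsilon}$ is a smooth, non-negative subharmonic function dominated by $|\nabla u|\in L^{n/(n-1)}$. Lemma~\ref{lem:Yau} then makes it constant, so $|\nabla u|$ is constant. Since a complete non-compact manifold with $\Ric\ge0$ has infinite volume, this forces $\nabla u\equiv0$.

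Finally, the Lipschitz cut-off $\phi=\min\{1,\max\{0,2-d(\cdot,q)/r\}\}$ suffices for your integration by parts, so no smooth cut-off is needed.
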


\begin{lemma}[Tashiro \cite{Tashiro}]\label{lem:Tashiro}
Let $(M,g)$ be a complete Riemannian manifold.
If there exists a smooth function $w$ on $M$ such that $\Hess w = c\,g$ for some constant $c\neq 0$, then $M$ is isometric to Euclidean space, and $w$ is a quadratic polynomial of the form
\[
w(x) = \frac{c}{2}|x|^2 + \langle a, x\rangle + b.
\]
\end{lemma}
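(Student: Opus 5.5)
The plan is the classical argument for Tashiro's lemma: reduce everything to the behaviour of $w$ along geodesics, locate a unique critical point, and read off the metric in polar coordinates about that point. Replacing $w$ by $-w$ if necessary, I will assume $c>0$. For any unit-speed geodesic $\gamma$ we have $(w\circ\gamma)''=\Hess w(\gamma',\gamma')=c$. Hence
\[
w(\gamma(t))=w(\gamma(0))+t\langle\nabla w,\gamma'(0)\rangle+\tfrac{c}{2}t^2 .
\]
Fix $p\in M$. By completeness every point is reached by a minimizing geodesic from $p$, so $w(x)\ge w(p)-d(p,x)|\nabla w(p)|+\tfrac c2 d(p,x)^2$. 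Thus $w$ is proper and bounded below, and it attains a minimum at some point $o$.

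Next I will show that $o$ is the only critical point. If $q\neq o$ were another, restrict $w$ to a geodesic joining $o$ to $q$. The resulting quadratic in $t$ would have vanishing derivative at both endpoints, which is impossible since its second derivative is $c\neq 0$. I will also use the first integral $\nabla|\nabla w|^2=2\Hess w(\nabla w,\cdot)=2c\,\nabla w$. It gives $|\nabla w|^2=2c\,(w-w(o))$. Set $r:=\sqrt{2(w-w(o))/c}$, which is continuous on $M$ and smooth on $M\setminus\{o\}$. Then $|\nabla r|=1$ there, and $\Hess(r^2/2)=\Hess w/c=g$, so
\[
\Hess r=\frac1r\bigl(g-dr\otimes dr\bigr)\quad\text{on }M\setminus\{o\}.
\]
Along geodesics emanating from $o$ the displayed quadratic gives $w=w(o)+\tfrac c2 t^2$. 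Hence $r(\exp_o(tv))=t$ for unit $v$, so $r=d(o,\cdot)$ and $\nabla r$ is the radial field $\partial_r$.

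The step I expect to be the main obstacle is upgrading this to a global isometry. I plan to show that $\exp_o:T_oM\to M$ is a diffeomorphism by using the integral curves of $\nabla r$. Since $r$ is smooth with $|\nabla r|=1$ off $o$, its integral curves are unit-speed geodesics, and each one extends backward to $o$ because $r$ decreases at unit rate and the only critical point of $w$ is $o$. This makes $\exp_o$ surjective. Injectivity and the absence of conjugate points follow because the level sets $\{r=s\}$ are smooth compact hypersurfaces, diffeomorphic to $S^{n-1}$ via the flow, so there are no cut points. Then in polar coordinates $(r,\theta)$ we have $\mathcal L_{\partial_r}g=2\Hess r=\tfrac2r\bigl(g-dr^2\bigr)$. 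On the slices this reads $\partial_r(g_r)=\tfrac2r g_r$, so $g_r=r^2h$ for a fixed metric $h$ on $S^{n-1}$. The limit $r\to0$, compared with $g_o$ on $T_oM$, forces $h$ to be the round metric. Therefore $g=dr^2+r^2g_{\mathbb S^{n-1}}$, and $\exp_o$ is an isometry from Euclidean $\mathbb R^n$.

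Finally, in the Euclidean coordinates $x=\exp_o^{-1}$ we get $w=w(o)+\tfrac c2|x|^2$. Any other choice of Euclidean coordinates differs by a rigid motion, and this produces the stated form $\tfrac c2|x|^2+\langle a,x\rangle+b$. The case $c<0$ follows by applying the argument to $-w$.
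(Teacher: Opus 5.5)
The paper gives no proof of this lemma; it simply quotes Tashiro. Your argument is a correct rendition of the classical proof of that result: a unique critical point $o$, $r=\sqrt{2(w-w(o))/c}=d(o,\cdot)$, and $\Hess r=\frac1r(g-dr\otimes dr)$ forcing $g=dr^2+r^2g_{\mathbb S^{n-1}}$. The one step you state loosely is injectivity of $\exp_o$ and the absence of conjugate points. Both follow at once from facts you already have. First, every radial geodesic satisfies $\gamma_v'(t)=\nabla r(\gamma_v(t))$ for $t>0$, so two of them meeting at $x\neq o$ do so at the same time $r(x)$ with the same velocity, and hence coincide. Second, the flow of $\nabla r$ starting from a small geodesic sphere then gives the diffeomorphism $(0,\infty)\times S^{n-1}\to M\setminus\{o\}$.
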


Finally, we need the existence of good cut-off functions under a mild Ricci lower bound.
The following statement is a special case of results of Bianchi and Setti \cite{Bianchi}.

\begin{lemma}[{\cite[Corollary 2.3]{Bianchi}}]\label{lem:Bianchi}
Assume that the Ricci curvature of a complete manifold $M$ satisfies
\[
\Ric \ge -(n-1)\frac{k^2}{1+r^2}\, g
\]
for some constant $k$, where $r(x)=d(x,q)$ is the distance from a fixed point $q\in M$.
Then, for every large $r>0$, there exists a smooth cut-off function $\xi_r$ with compact support in $B(q,2r)$ such that
\[
0\le \xi_r\le 1,\quad \xi_r\equiv 1 \text{ on } B(q,r),\quad
|\nabla\xi_r|\le \frac{K}{r},\quad |\Delta\xi_r|\le \frac{K}{r^2},
\]
with a constant $K>0$ independent of $r$.
\end{lemma}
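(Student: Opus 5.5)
The statement is the cut-off lemma of Bianchi and Setti \cite{Bianchi}, and the plan is to reduce it to a unit-scale construction by scaling. The point is that the decay rate $k^2/(1+r^2)$ is \emph{quadratic}, which is exactly the rate that is invariant under parabolic rescaling. Fix a large $r$ and consider the rescaled metric $\tilde g = r^{-2}g$, with distance $\tilde\rho = \rho/r$, where $\rho(x)=d(x,q)$ is the distance function (kept distinct from the parameter $r$). On the region $\{r/4\le \rho\le 4r\}$, which becomes the annulus $\{1/4\le\tilde\rho\le 4\}$, the hypothesis gives
\[
\widetilde{\Ric} = \Ric \ge -(n-1)\frac{k^2 r^2}{1+r^2\tilde\rho^2}\,\tilde g \ge -16(n-1)k^2\,\tilde g .
\]
This lower bound is uniform in $r$. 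So if we can build a cut-off $\tilde\xi$ for the rescaled annulus with $|\tilde\nabla\tilde\xi|\le K$ and $|\tilde\Delta\tilde\xi|\le K$, then scaling back gives $|\nabla\xi_r|\le K/r$ and $|\Delta\xi_r|\le K/r^2$, as required.

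\textbf{Step 1: Laplacian comparison for $\rho$.} Compare with the rotationally symmetric model $dt^2+h(t)^2g_{\mathbb S^{n-1}}$, where
\[
h''=\frac{k^2}{1+t^2}\,h,\qquad h(0)=0,\quad h'(0)=1 .
\]
Solutions of this ODE grow at most polynomially, like $t^{\alpha}$ with $\alpha$ depending on $k$. Hence $h'/h\le C/t$ for $t\ge 1$, and the standard comparison theorem gives
\[
\Delta\rho\le (n-1)\frac{h'}{h}(\rho)\le \frac{C}{\rho}
\]
in the barrier/distributional sense away from $q$. Note that this uses the curvature hypothesis along entire geodesics from $q$, not just on the annulus. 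The bad curvature near $q$ in the rescaled picture is harmless, because it is already absorbed into the model function $h$.

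\textbf{Step 2: the smooth cut-off.} The difficulty is that $\rho$ is only Lipschitz and Step 1 gives only an \emph{upper} bound on $\Delta\rho$. A naive choice $\phi(\rho/r)$ therefore controls $|\nabla\xi_r|$ but not $|\Delta\xi_r|$. I would follow the Cheeger--Colding construction at unit scale.
\begin{itemize}
\item On the rescaled annulus, solve a Dirichlet problem $\tilde\Delta G = 1$ (or use the model-comparison Green-type function $G=\psi(\tilde\rho)$ built from $h$) with constant boundary values on the two boundary spheres.
\item The Laplacian comparison of Step 1, together with volume doubling at unit scale (Bishop--Gromov, valid with the uniform bound $-16(n-1)k^2$), gives two-sided bounds $c^{-1}\le G\le c$ in the appropriate sense and a uniform gradient bound via Cheng--Yau.
\item Define $\tilde\xi=\phi\circ G$ with $\phi$ a fixed smooth one-variable cut-off.
\end{itemize}
Then $\tilde\Delta\tilde\xi = \phi''(G)|\tilde\nabla G|^2+\phi'(G)$ is bounded by a constant depending only on $n$, $k$ and $\phi$. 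Rescaling back and setting $\xi_r\equiv 1$ on $B(q,r)$ and $\xi_r\equiv 0$ outside $B(q,2r)$ yields the claimed estimates with $K$ independent of $r$.

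\textbf{Main obstacle.} I expect the hardest step to be the two-sided control of $\Delta\xi_r$, i.e.\ producing a smooth proxy for the distance whose Laplacian is bounded \emph{below} as well as above. This is exactly why the Poisson-equation/Cheng--Yau gradient estimate route is needed, rather than simply smoothing $\rho$. If that route proves cumbersome, I would instead invoke directly the exhaustion function of Bianchi--Setti: a smooth $\gamma$ with
\[
C^{-1}\rho\le\gamma\le C\rho,\qquad |\nabla\gamma|\le C,\qquad |\Delta\gamma|\le C/\gamma ,
\]
and set $\xi_r=\phi(\gamma/r)$, for which the required bounds follow immediately from the chain rule.
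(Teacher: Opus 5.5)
The paper gives no proof of this lemma; it only quotes Bianchi--Setti. Your reduction is sound as far as it goes. The rescaling $\tilde g=r^{-2}g$, the uniform bound $\widetilde{\Ric}\ge-16(n-1)k^2\tilde g$ on the annulus, and above all the scale-invariant comparison $\tilde\Delta\tilde\rho\le C/\tilde\rho$ (with the bad curvature near $q$ absorbed into $h$) explain why $K/r^2$ is the right order.

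The gap is Step~2, which is where all the content of the lemma lies. An upper bound on $\Delta\rho$ makes nonincreasing radial model solutions subsolutions and nondecreasing ones supersolutions, and nothing more. Consider a Dirichlet problem on an annulus with constant data on the two spheres. Every such barrier pins $G$ quantitatively only along the \emph{inner} sphere; none can force $G$ close to its boundary value near the outer sphere.

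You need exactly that outer control. If the outer sphere is $\{\tilde\rho=2\}$, the band where $\phi'(G)\neq0$ must stay a definite distance from it for Cheng--Yau (an interior estimate) to apply. If the outer sphere lies further out, you instead need $G$ below threshold on $\{\tilde\rho\ge2\}$. Neither follows from what you list.

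The outer sphere can also be very irregular. Take the far pole of a round bubble of size comparable to $r$, lying at distance just over $2r$; this is compatible with quadratic curvature decay. Near that local maximum of $\rho$, the complement of $B(q,2r)$ is a tiny island inside a fat part of the ball. The Dirichlet solution can then drop from an interior value to the boundary value within a tiny neighbourhood of that island, with arbitrarily large gradient on the band.

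``Two-sided bounds $c^{-1}\le G\le c$'' is not the property needed and does not address this. The parenthetical alternative $G=\psi(\tilde\rho)$ is not smooth and has only a distributional upper bound on its Laplacian, so it cannot replace the PDE solution.

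Your fallback amounts to citing Bianchi--Setti, as the paper does, but the closing step is incomplete for the statement as written. The derivative bounds for $\xi_r=\phi(\gamma/r)$ do follow from the chain rule. The nesting does not. From $\gamma\le C\rho$, the only way to guarantee $\xi_r\equiv1$ on $B(q,r)$ is $\phi\equiv1$ on $[0,C]$. From $\rho\le C\gamma$, the only way to guarantee support in $B(q,2r)$ is $\phi\equiv0$ on $[2/C,\infty)$. Both hold only if $C^2<2$.

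In general you obtain $\xi_r\equiv1$ on $B(q,r)$ with support in $B(q,\Lambda r)$, $\Lambda=2C^2$. That weaker nesting is what such an exhaustion function naturally delivers, and it is all Theorem~\ref{thm4} needs after adjusting constants. It is not the factor $2$ asserted in the lemma.
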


\section{The key identity – subharmonicity of $v$}
\label{sec:key}

In this section we assume that $M$ is a gradient generalized $m$-quasi-Einstein manifold with constant scalar curvature $R$ and $\mu=1/m$, and we set
\[
v = e^{-f/m}\lambda.
\]

\begin{lemma}\label{lem:key}
Under the above assumptions,
\begin{equation}
e^{f/m}\Delta v = \frac{1}{n-1}\left(\frac{m-1}{m}|\hat{\Ric}|^2
- \frac{R(n\lambda-R)}{n}\Bigl(1+\frac{n-1}{m}\Bigr)\right). \label{eq:key}
\end{equation}
\end{lemma}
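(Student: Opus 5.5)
The plan is a direct computation. I expand $\Delta v$ by the product rule, substitute the trace identity \eqref{eq:trace} for $\Delta f$, and substitute the condition $I=0$ in the form \eqref{eq:Izero} for the combination $\Delta\lambda-\frac{2}{m}\langle\grad f,\grad\lambda\rangle$. The weight $e^{-f/m}$ is chosen so that the cross term $\frac{2}{m}\langle \grad f,\grad\lambda\rangle$ is produced by the product rule with exactly the coefficient that appears in \eqref{eq:Izero}.

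\textbf{Step 1: product rule.} For $v=e^{-f/m}\lambda$ we have $\grad v=e^{-f/m}\bigl(\grad\lambda-\frac{\lambda}{m}\grad f\bigr)$. Taking the divergence gives
\[
e^{f/m}\Delta v=\Delta\lambda-\frac{2}{m}\langle\grad f,\grad\lambda\rangle+\lambda\Bigl(\frac{|\nabla f|^2}{m^2}-\frac{\Delta f}{m}\Bigr).
\]

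\textbf{Step 2: remove the gradient term.} Insert \eqref{eq:trace} with $\mu=1/m$, namely $\Delta f=n\lambda-R+\frac1m|\nabla f|^2$. The $|\nabla f|^2/m^2$ contributions cancel identically, which is the reason this weight is the right one. The last bracket therefore reduces to $-\frac{\lambda(n\lambda-R)}{m}$.

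\textbf{Step 3: use $I=0$.} Since $R$ and $\mu$ are constant, Lemma~\ref{lem:I} gives $I=0$, i.e.\ \eqref{eq:Izero}. Dividing \eqref{eq:Izero} by $n-1$ yields
\[
\Delta\lambda-\frac{2}{m}\langle\grad f,\grad\lambda\rangle
=\frac{1}{n-1}\Bigl(\frac{m-1}{m}|\hat{\Ric}|^2-\frac{(n\lambda-R)R}{n}\Bigl(1+\frac{n-1}{m}\Bigr)\Bigr)-\frac{(n\lambda-R)\lambda n(1-n)}{mn(n-1)}.
\]
The last term equals $+\frac{\lambda(n\lambda-R)}{m}$. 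It cancels exactly against the contribution $-\frac{\lambda(n\lambda-R)}{m}$ from Step 2, and what remains is precisely \eqref{eq:key}.

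The only real obstacle is sign and coefficient bookkeeping. In particular, the $\lambda^2$-type term coming from the $\mu\lambda n(1-n)$ part of $I$ must cancel exactly against the $\lambda\Delta f/m$ term. This cancellation is what makes the identity close, and I would double-check it separately, including the sign convention for how $I$ enters \eqref{eq:Izero}.
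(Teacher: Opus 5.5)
Your proposal is correct and is essentially the paper's proof: expand $e^{f/m}\Delta v$ by the product rule, use the trace identity to cancel the $\lambda|\nabla f|^2/m^2$ terms, then invoke \eqref{eq:Izero} so that the $\frac{2}{m}\langle\grad f,\grad\lambda\rangle$ terms and the $\pm\frac{\lambda(n\lambda-R)}{m}$ terms cancel. The only difference is cosmetic: the paper first solves \eqref{eq:Izero} for $\Delta\lambda$ and does the $\lambda^2$ cancellation inside a bracket, whereas you split that term off beforehand; your sign check $-\frac{(n\lambda-R)\lambda n(1-n)}{mn(n-1)}=+\frac{\lambda(n\lambda-R)}{m}$ is right.
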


\begin{proof}
From \eqref{eq:Izero} we solve for $\Delta\lambda$:
\begin{align}
\Delta\lambda &= \frac{2}{m}\langle\grad f,\grad\lambda\rangle
+ \frac{m-1}{m(n-1)}|\hat{\Ric}|^2 \notag\\
&\quad - \frac{(n\lambda-R)\bigl(R(1+\frac{n-1}{m}) + \frac{\lambda n(1-n)}{m}\bigr)}{n(n-1)}. \label{eq:Deltalambda}
\end{align}
On the other hand, a direct computation using $\divg(\phi X)=\phi\divg X+\langle\grad\phi,X\rangle$ gives
\begin{align*}
e^{f/m}\Delta v &= e^{f/m}\divg\bigl( e^{-f/m}(\nabla\lambda - \tfrac{\lambda}{m}\nabla f) \bigr)\\
&= \Delta\lambda - \frac{2}{m}\langle\grad f,\nabla\lambda\rangle
- \frac{1}{m}\lambda\Delta f + \frac{1}{m^2}\lambda|\nabla f|^2.
\end{align*}
Now use the trace identity \eqref{eq:trace}, $\Delta f = n\lambda - R + \frac{1}{m}|\nabla f|^2$, to replace $\Delta f$:
\[
e^{f/m}\Delta v = \Delta\lambda - \frac{2}{m}\langle\grad f,\nabla\lambda\rangle
- \frac{n}{m}\lambda^2 + \frac{1}{m}\lambda R.
\]
Insert the expression for $\Delta\lambda$ from \eqref{eq:Deltalambda}; the gradient terms cancel and we obtain
\[
e^{f/m}\Delta v = \frac{m-1}{m(n-1)}|\hat{\Ric}|^2
- \frac{(n\lambda-R)\bigl(R(1+\frac{n-1}{m}) + \frac{\lambda n(1-n)}{m}\bigr)}{n(n-1)}
- \frac{\lambda}{m}(n\lambda-R).
\]
The last two summands combine as follows.
Let us compute
\begin{align*}
&-\frac{(n\lambda-R)}{n(n-1)}\Bigl(R(1+\tfrac{n-1}{m}) + \frac{\lambda n(1-n)}{m}\Bigr) - \frac{\lambda}{m}(n\lambda-R)\\
&= -(n\lambda-R)\Bigl[ \frac{R(1+\frac{n-1}{m})}{n(n-1)} + \frac{\lambda n(1-n)}{m}\cdot\frac{1}{n(n-1)} + \frac{\lambda}{m} \Bigr].
\end{align*}
Since $\frac{\lambda n(1-n)}{m}\cdot\frac{1}{n(n-1)} = \frac{\lambda(1-n)}{m(n-1)} = -\frac{\lambda}{m}$, the bracket simplifies to
\[
\frac{R(1+\frac{n-1}{m})}{n(n-1)} - \frac{\lambda}{m} + \frac{\lambda}{m}
= \frac{R(1+\frac{n-1}{m})}{n(n-1)}.
\]
Thus the whole expression becomes
\[
-(n\lambda-R)\frac{R(1+\frac{n-1}{m})}{n(n-1)}
= -\frac{R(n\lambda-R)}{n(n-1)}\Bigl(1+\frac{n-1}{m}\Bigr).
\]
Adding the first summand, we obtain exactly \eqref{eq:key}.
\end{proof}

When $R\le 0$, $\lambda>0$ and $m>1$, every term on the right-hand side of \eqref{eq:key} is clearly non-negative; hence $\Delta v\ge 0$.
Since $v=e^{-f/m}\lambda>0$, $v$ is a positive subharmonic function.

\section{Proof of Theorem 1 ($v\in L^p$)}
\label{sec:thm1}

\begin{theorem}\label{thm1}
Let $(M^n,g)$ be a complete non-compact gradient generalized $m$-quasi-Einstein manifold with constant scalar curvature $R\le 0$, $\lambda>0$ and $m>1$.
If the weighted function $v=e^{-f/m}\lambda$ belongs to $L^p(M)$ for some $p>1$, then \textbf{no such manifold exists}. Equivalently, the hypotheses are mutually inconsistent.
\end{theorem}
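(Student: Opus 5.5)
The plan is to apply Yau's $L^p$ lemma (Lemma~\ref{lem:Yau}) to $v$, then use the key identity (Lemma~\ref{lem:key}) to extract strong pointwise information from $\Delta v\equiv 0$, and finally reach a contradiction via Tashiro's theorem (Lemma~\ref{lem:Tashiro}).

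\textbf{Step 1: $v$ is constant.} By Lemma~\ref{lem:key} and the remark after it, $v=e^{-f/m}\lambda$ is a positive smooth subharmonic function under $R\le 0$, $\lambda>0$, $m>1$. Since $v\in L^p(M)$ with $p>1$, Lemma~\ref{lem:Yau} gives $v\equiv c$ for some constant $c>0$. Hence $\Delta v\equiv 0$.

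\textbf{Step 2: the manifold is Ricci-flat.} With $\Delta v\equiv 0$, the right-hand side of \eqref{eq:key} vanishes. Both of its terms are non-negative, so each vanishes separately:
\[
\frac{m-1}{m}|\hat{\Ric}|^2=0
\qquad\text{and}\qquad
R(n\lambda-R)\Bigl(1+\tfrac{n-1}{m}\Bigr)=0 .
\]
Since $m>1$, the first gives $\hat{\Ric}=0$. For the second, if $R<0$ then $n\lambda-R>0$, which is impossible; so $R=0$. Together these give $\Ric\equiv 0$.

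\textbf{Step 3: reduction to a Hessian equation.} The structure equation now reads
\[
\Hess f-\tfrac1m\,df\otimes df=\lambda g .
\]
Put $u=e^{-f/m}>0$. A direct computation gives
\[
\Hess u=-\tfrac{u}{m}\Bigl(\Hess f-\tfrac1m\,df\otimes df\Bigr)=-\tfrac{\lambda u}{m}\,g=-\tfrac{c}{m}\,g ,
\]
using $\lambda u=v=c$. The constant $-c/m$ is nonzero.

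\textbf{Step 4: contradiction.} By Lemma~\ref{lem:Tashiro}, $M$ is isometric to $\mathbb{R}^n$ and
\[
u(x)=-\tfrac{c}{2m}|x|^2+\langle a,x\rangle+b .
\]
This $u$ becomes negative for large $|x|$, contradicting $u>0$. Independently, $\mathbb{R}^n$ has infinite volume, so the nonzero constant $v=c$ cannot lie in $L^p$. Either way the hypotheses are inconsistent, so no such manifold exists.

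The only delicate point is Step 2. There one must check that each term of \eqref{eq:key} really has a sign, so that both can be forced to vanish; this is where $m>1$, $\lambda>0$ and $R\le 0$ are used. After that, the substitution $u=e^{-f/m}$ linearizes the equation and Tashiro's theorem finishes the argument.
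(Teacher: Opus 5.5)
Your proof is correct and is essentially the paper's argument. Yau's lemma makes $v$ constant, the two non-negative terms of the key identity then force $\Ric\equiv 0$, and $e^{-f/m}$ (the paper uses $w=me^{-f/m}$, which differs only by the factor $m$) has constant negative Hessian, so Tashiro's theorem makes it a downward paraboloid on $\mathbb{R}^n$, contradicting its positivity; your extra remark that a nonzero constant cannot lie in $L^p(\mathbb{R}^n)$ is an equally valid way to finish.
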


\begin{proof}
By Lemma~\ref{lem:key}, $v>0$ is subharmonic.
Yau's lemma (Lemma~\ref{lem:Yau}) implies that $v$ is constant; write $v\equiv c>0$.
Then $\Delta v=0$, and the non-negative right-hand side of \eqref{eq:key} must vanish term by term:
\begin{gather*}
\frac{m-1}{m(n-1)}|\hat{\Ric}|^2 = 0,\qquad
-\frac{R(n\lambda-R)}{n(n-1)}\Bigl(1+\frac{n-1}{m}\Bigr) = 0.
\end{gather*}
Since $m>1$, the first equality forces $|\hat{\Ric}|^2=0$, i.e.\ $\Ric = \frac{R}{n}g$.
The second, together with $n\lambda-R>0$ (because $\lambda>0$ and $R\le 0$), gives $R=0$.
Hence $\Ric = 0$.

The GQE equation reduces to $\Hess f - \frac1m df\otimes df = \lambda g$, and $v=c$ yields $\lambda = c e^{f/m}$.
Set $w = m e^{-f/m}$.
A direct computation gives
\[
\Hess w = \nabla( -\nabla f\, e^{-f/m}) = e^{-f/m}\bigl( -\Hess f + \frac1m df\otimes df \bigr)
= -e^{-f/m}\lambda g = -c\,g.
\]
Thus $w$ is a smooth positive function (since $w = m e^{-f/m} > 0$) with $\Hess w = -c\,g$, where $c>0$.
By Tashiro's theorem (Lemma~\ref{lem:Tashiro}), $M$ is isometric to Euclidean space, and $w$ must be of the form
\[
w(x) = -\frac{c}{2}|x|^2 + \langle a, x\rangle + b
\]
for some $a\in\mathbb{R}^n$, $b\in\mathbb{R}$.
But this expression is strictly negative for sufficiently large $|x|$, contradicting $w>0$ everywhere.
Therefore, the assumptions lead to a contradiction; hence no such complete non-compact manifold exists.
\end{proof}

\section{Proof of Theorem 2 (linear volume growth and bounded $v$)}
\label{sec:thm2}

\begin{theorem}\label{thm2}
Let $(M^n,g)$ be a complete non-compact gradient generalized $m$-quasi-Einstein manifold with constant scalar curvature $R\le 0$, $\lambda>0$ and $m>1$.
Assume that $v$ is bounded above and that the volume of geodesic balls grows at most linearly, i.e.\ $\Vol(B(q,r)) \le C r$ for some $C>0$ and all $r>0$.
Then \textbf{no such manifold exists}.
\end{theorem}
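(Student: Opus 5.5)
The plan is to show that $v$ must be constant, and then repeat the endgame of the proof of Theorem~\ref{thm1} verbatim. By Lemma~\ref{lem:key}, $v$ is a positive smooth subharmonic function. By hypothesis $0<v\le \sup v<\infty$. Since the volume growth is at most linear, $v\in L^2$ on balls with
\[
\int_{B(q,2r)} v^2 \le (\sup v)^2\, C\,(2r).
\]

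\textbf{Step 1: $\nabla v\equiv 0$.} We apply the Schoen--Yau inequality (Lemma~\ref{lem:SY}) to $u=v$:
\[
\int_{B(q,r)}|\nabla v|^2 \le \frac{A}{r^2}\int_{B(q,2r)}v^2 \le \frac{2AC(\sup v)^2}{r}.
\]
Letting $r\to\infty$ gives $\int_M|\nabla v|^2=0$, so $v\equiv c>0$ is constant. This is the main (and only) analytic step. The one point to check is that the inequality is used with an ordinary cut-off satisfying $|\nabla\phi|\le c/r$. Such cut-offs exist on any complete manifold, for instance a Lipschitz function of the distance, after smoothing or by approximation. No curvature assumption is therefore needed, and Lemma~\ref{lem:Bianchi} is not required. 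If one worries about the exact form of Lemma~\ref{lem:SY}, the fallback is to prove it directly. Multiply $\Delta v\ge 0$ by $\phi^2 v$, integrate by parts, and absorb via Cauchy--Schwarz to get
\[
\int\phi^2|\nabla v|^2\le 4\int v^2|\nabla\phi|^2.
\]

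\textbf{Step 2: vanishing of the curvature terms.} With $v$ constant, $\Delta v=0$. In \eqref{eq:key} both summands on the right are non-negative, since $m>1$, $R\le0$ and $n\lambda-R>0$. Hence both vanish: $\hat{\Ric}=0$ and $R=0$, so $\Ric=0$.

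\textbf{Step 3: contradiction via Tashiro.} Exactly as in Theorem~\ref{thm1}, $\lambda=ce^{f/m}$. Setting $w=me^{-f/m}>0$, we get $\Hess w=-c\,g$ with $c>0$. By Tashiro's theorem (Lemma~\ref{lem:Tashiro}), $M$ is Euclidean and $w$ is a concave quadratic, so $w$ is negative for large $|x|$. This contradicts $w>0$.

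As a consistency check, Euclidean space has polynomial volume growth of order $n\ge3$, which already conflicts with the linear growth hypothesis. Either contradiction shows that no such manifold exists.
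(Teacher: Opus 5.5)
Your proof is correct and follows the paper's argument: the Schoen--Yau (Caccioppoli) estimate, together with $0<v\le \sup v$ and linear volume growth, gives $\int_{B(q,r)}|\nabla v|^2=O(1/r)$, so $v$ is constant, and then the endgame of Theorem~\ref{thm1} applies ($\Ric=0$, $\Hess w=-c\,g$ for $w=me^{-f/m}>0$, Tashiro, and $w<0$ for large $|x|$). Your side remark that the resulting Euclidean space has volume growth of order $r^n$ with $n\ge 3$, which already contradicts $\Vol(B(q,r))\le Cr$, is a valid alternative way to close the argument once Tashiro's theorem has been invoked.
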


\begin{proof}
Let $v\le C_0$ on $M$.
By Lemma~\ref{lem:SY} applied to the subharmonic function $v$, for any large $r$ we have
\[
\int_{B(q,r)}|\nabla v|^2 \le \frac{16}{r^2}\int_{B(q,2r)} v^2
\le \frac{16 C_0^2}{r^2}\Vol(B(q,2r)).
\]
Using the linear volume growth, $\Vol(B(q,2r))\le 2C r$, we obtain
\[
\int_{B(q,r)}|\nabla v|^2 \le \frac{32 C_0^2 C}{r},
\]
which tends to $0$ as $r\to\infty$.
Thus $\int_M |\nabla v|^2 = 0$, so $\nabla v=0$ everywhere; consequently $v$ is constant.
The remainder of the proof is identical to that of Theorem~\ref{thm1}, leading to the same contradiction with $w>0$.
\end{proof}

\section{Proof of Theorem 3 ($|\nabla v|\in L^1$)}
\label{sec:thm3}

\begin{theorem}\label{thm3}
Let $(M^n,g)$ be a complete non-compact gradient generalized $m$-quasi-Einstein manifold with constant scalar curvature $R\le 0$, $\lambda>0$ and $m>1$.
If $|\nabla v|\in L^1(M)$, then \textbf{no such manifold exists}.
\end{theorem}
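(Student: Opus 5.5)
The plan is to apply the Caminha--Souza--Camargo lemma (Lemma~\ref{lem:CSC}) to the vector field $X=\nabla v$. By Lemma~\ref{lem:key}, with $R\le 0$, $\lambda>0$ and $m>1$, we have $\divg X=\Delta v\ge 0$ on all of $M$, so $\divg X$ does not change sign. The hypothesis $|\nabla v|\in L^1(M)$ is exactly the integrability condition $|X|\in L^1(M)$. Orientability is not needed, by the remark after Lemma~\ref{lem:CSC}: we may pass to the orientation double cover, where $\widetilde v$ is again the weighted function of the lifted structure and $|\nabla\widetilde v|$ is still integrable, its integral being at most twice the original. The lemma then gives $\Delta v\equiv 0$ on $M$.

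Next I would read off the consequences of $\Delta v\equiv 0$ from identity \eqref{eq:key}. The right-hand side is a sum of two non-negative terms, so each term vanishes. Since $m>1$, the first term forces $\hat{\Ric}=0$. In the second term, $n\lambda-R>0$ and $1+\frac{n-1}{m}>0$, so it forces $R=0$. Together these give $\Ric\equiv 0$.

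The step I expect to be the main obstacle is deducing that $v$ is constant, since $\Delta v=0$ alone does not give this on a non-compact manifold. I would handle it by returning to the GQE equation with $\Ric=0$, namely $\Hess f-\frac1m df\otimes df=\lambda g$, and setting $w=me^{-f/m}>0$. As in the proof of Theorem~\ref{thm1}, this gives $\Hess w=-e^{-f/m}\lambda g=-v\,g$. A function whose Hessian is pointwise proportional to the metric has a constant proportionality factor in dimension $n\ge 3$. To see this, take the divergence of $\Hess w=-v g$: we get $\divg\Hess w=\nabla\Delta w+\Ric(\nabla w)=\nabla\Delta w=-n\nabla v$, while directly $\divg(-vg)=-\nabla v$. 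Hence $(n-1)\nabla v=0$, and $v\equiv c>0$.

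This reduces everything to the situation at the end of the proof of Theorem~\ref{thm1}: $w>0$ with $\Hess w=-c\,g$ and $c>0$. By Tashiro's theorem (Lemma~\ref{lem:Tashiro}), $M$ is isometric to $\mathbb{R}^n$ and $w(x)=-\frac c2|x|^2+\langle a,x\rangle+b$. This is negative for large $|x|$, contradicting $w>0$, so no such manifold exists. The same divergence argument also shows that, in Theorems~\ref{thm1} and~\ref{thm2}, constancy of $v$ could have been derived rather than assumed.
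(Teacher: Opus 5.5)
Your proposal is correct and follows the paper's proof essentially step for step. The paper also applies the Caminha--Souza--Camargo lemma to $X=\nabla v$ to get $\Delta v\equiv 0$, hence $\Ric=0$ by \eqref{eq:key}. It then takes the divergence of $\Hess w=-v\,g$ with $w=me^{-f/m}$ to obtain $(n-1)\nabla v=0$, and invokes Tashiro's theorem to contradict $w>0$.

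Your closing aside is slightly off. In Theorems~\ref{thm1} and~\ref{thm2}, constancy of $v$ is not assumed but obtained directly from Yau's lemma and the Schoen--Yau estimate. Moreover, the divergence argument needs $\Ric=0$, which there is only available once $v$ is already known to be constant.
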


\begin{proof}
Apply Lemma~\ref{lem:CSC} to the vector field $X = \nabla v$ (after passing to the orientation cover if necessary, see Remark after Lemma~\ref{lem:CSC}).
From Lemma~\ref{lem:key} we have $\divg X = \Delta v \ge 0$, and by assumption $|X| = |\nabla v|\in L^1(M)$.
Thus $\Delta v \equiv 0$.
Vanishing of the non-negative right-hand side of \eqref{eq:key} gives $\hat{\Ric}=0$ and $R=0$, so $\Ric=0$.

The GQE equation becomes $\Hess f - \frac1m df\otimes df = \lambda g$. 
We now prove that $v$ must be constant without invoking Kanai's lemma.
Set $w = m e^{-f/m}$. Then, as before,
\[
\Hess w = -v\,g.
\]
Taking the trace gives $\Delta w = -n v$.
Now apply the divergence operator to both sides of $\Hess w = -v\,g$:
\[
\divg(\Hess w) = -\nabla v.
\]
On the other hand, in Riemannian geometry we have the standard identity
\[
\divg(\Hess w) = \nabla(\Delta w) + \Ric(\nabla w, \cdot).
\]
Since $\Ric=0$, this reduces to
\[
\divg(\Hess w) = \nabla(\Delta w) = \nabla(-n v) = -n\nabla v.
\]
Comparing with $-\nabla v$, we obtain $(n-1)\nabla v = 0$, and since $n\ge 3$, $\nabla v = 0$. Hence $v$ is constant.
Now $v\equiv c>0$ and the same argument as in Theorem~\ref{thm1} (using Tashiro's theorem and the positivity of $w$) yields a contradiction.
Therefore no such manifold exists.
\end{proof}

\section{Proof of Theorem 4 (weighted $L^1$ under a Ricci lower bound)}
\label{sec:thm4}

\begin{theorem}\label{thm4}
Let $(M^n,g)$ be a complete non-compact gradient generalized $m$-quasi-Einstein manifold with constant scalar curvature $R\le 0$, $\lambda>0$ and $m>1$.
Suppose that there exists a constant $k$ such that
\[
\Ric \ge -(n-1)\frac{k^2}{1+r(x)^2}\,g,
\]
where $r(x)=d(x,q)$ for a fixed point $q\in M$, and that the function $v$ satisfies the weighted integrability condition
\[
\int_{M\setminus B(q,r)} \frac{v(x)}{d(x,q)^2}\,dV_g(x) < \infty
\]
for all $r>0$ sufficiently large.
Then \textbf{no such manifold exists}.
\end{theorem}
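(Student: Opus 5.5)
The plan is to follow the same pattern as Theorems~\ref{thm1}--\ref{thm3}. First show that $\Delta v\equiv 0$ (or that $v$ is constant). Then use the vanishing of the right-hand side of \eqref{eq:key} to get $\Ric=0$. Finally, run the argument from the proof of Theorem~\ref{thm3} ($\Hess w=-v\,g$ with $w=me^{-f/m}$, the divergence identity, $n\ge 3$) to get $v$ constant, and conclude with Tashiro's theorem (Lemma~\ref{lem:Tashiro}) against $w>0$. The new input is the Ricci lower bound, which through Lemma~\ref{lem:Bianchi} provides cut-off functions $\xi_r$ with $|\Delta\xi_r|\le K/r^2$. This lets us integrate $\Delta v$ against $\xi_r$ and move both derivatives onto the cut-off.

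Concretely, fix $q$, take $\xi_r$ from Lemma~\ref{lem:Bianchi} supported in $B(q,2r)$, equal to $1$ on $B(q,r)$. Since $\Delta v\ge 0$ by Lemma~\ref{lem:key} and $\xi_r\ge 0$, Green's formula gives
\[
0\le\int_{B(q,r)}\Delta v\,dV_g\le\int_M \xi_r\,\Delta v\,dV_g=\int_M v\,\Delta\xi_r\,dV_g .
\]
Note that $\Delta\xi_r$ vanishes on $B(q,r)$ because $\xi_r\equiv 1$ there. On the annulus $B(q,2r)\setminus B(q,r)$ we have $d(x,q)\le 2r$, so $K/r^2\le 4K/d(x,q)^2$. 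Since $v>0$, this yields
\[
\int_{B(q,r)}\Delta v\le 4K\int_{B(q,2r)\setminus B(q,r)}\frac{v}{d(x,q)^2}\,dV_g .
\]
The weighted integrability hypothesis says the tail $\int_{M\setminus B(q,r_0)} v/d^2$ is finite. Hence the right-hand side is bounded by the tail beyond radius $r$, which tends to $0$ as $r\to\infty$ by dominated convergence. Letting $r\to\infty$ gives $\int_M\Delta v=0$, and since $\Delta v\ge 0$, we get $\Delta v\equiv 0$.

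From \eqref{eq:key}, with $m>1$ and $n\lambda-R>0$, both non-negative terms vanish, so $\hat{\Ric}=0$ and $R=0$, hence $\Ric=0$. The GQE equation becomes $\Hess f-\frac1m df\otimes df=\lambda g$. With $w=me^{-f/m}$ this is $\Hess w=-v\,g$. Taking divergence and using $\divg\Hess w=\nabla\Delta w+\Ric(\nabla w,\cdot)=-n\nabla v$ gives $(n-1)\nabla v=0$, so $v\equiv c>0$. Tashiro's theorem then makes $M$ Euclidean with $w=-\frac c2|x|^2+\langle a,x\rangle+b$, which is negative for large $|x|$. This contradicts $w>0$, so no such manifold exists.

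The main obstacle is the integration-by-parts step. Since $\xi_r$ has compact support, $\int\xi_r\Delta v=\int v\Delta\xi_r$ holds with no boundary terms, so the real issue is only the sign bookkeeping, namely that $|\Delta\xi_r|$ is controlled on the annulus alone. One also has to check that the hypothesis of Lemma~\ref{lem:Bianchi} applies verbatim, and that the stated "for all $r$ sufficiently large" condition indeed gives a finite tail whose remainder tends to zero.
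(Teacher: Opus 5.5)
Your proposal is correct and follows the paper's own route: integrate $\Delta v\ge 0$ against the cut-offs $\xi_r$ from Lemma~\ref{lem:Bianchi}, move the Laplacian onto $\xi_r$, bound the annulus term by $4K\int v/d(x,q)^2$, deduce $\Ric=0$, and finish with the argument of Theorem~\ref{thm3} and Tashiro's theorem. Your step $\int_{B(q,r)}\Delta v\le\int_M\xi_r\Delta v$ is in fact slightly cleaner than the paper's use of monotone convergence. That step tacitly assumes $\xi_r\uparrow 1$ monotonically in $r$, which Lemma~\ref{lem:Bianchi} does not guarantee; Fatou's lemma would repair it.
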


\begin{proof}
Let $\Phi = e^{f/m}\Delta v$; by Lemma~\ref{lem:key}, $\Phi\ge 0$ and $\Delta v = e^{-f/m}\Phi$.
Thanks to the Ricci lower bound, Lemma~\ref{lem:Bianchi} supplies cut-off functions $\xi_r$ with the stated properties.
Multiply the identity $\Delta v = e^{-f/m}\Phi$ by $\xi_r$ and integrate over $M$:
\[
\int_M \xi_r e^{-f/m}\Phi \,dV_g = \int_M \xi_r \Delta v \,dV_g.
\]
Using Green's second identity and the fact that $\xi_r$ is compactly supported, this equals $\int_M v \Delta \xi_r \,dV_g$.
Thus
\[
0 \le \int_M \xi_r e^{-f/m}\Phi = \int_M v \Delta \xi_r
= \int_{B(q,2r)\setminus B(q,r)} v \Delta \xi_r,
\]
since $\xi_r\equiv 1$ on $B(q,r)$ and $\xi_r=0$ outside $B(q,2r)$.
Now $|\Delta \xi_r|\le K/r^2$ and $v>0$, so
\[
\int_{B(q,2r)\setminus B(q,r)} v \Delta \xi_r \le \frac{K}{r^2}\int_{B(q,2r)\setminus B(q,r)} v.
\]
In the annular region $B(q,2r)\setminus B(q,r)$ we have $d(x,q)\le 2r$, hence
$\frac{1}{r^2} = \frac{4}{(2r)^2} \le \frac{4}{d(x,q)^2}$.
Consequently,
\[
\frac{K}{r^2}\int_{B(q,2r)\setminus B(q,r)} v
\le 4K \int_{B(q,2r)\setminus B(q,r)} \frac{v}{d(x,q)^2}.
\]
By hypothesis, the right-hand side tends to $0$ as $r\to\infty$ (the integral over the complement of a ball goes to zero).
Therefore
\[
\lim_{r\to\infty}\int_M \xi_r e^{-f/m}\Phi = 0.
\]
Because $\xi_r\uparrow 1$ pointwise and the integrand is non-negative, the monotone convergence theorem yields
\[
\int_M e^{-f/m}\Phi = 0.
\]
Consequently $\Phi\equiv 0$ almost everywhere, and by smoothness $\Phi\equiv 0$ everywhere.
Hence $|\hat{\Ric}|^2=0$ and $R=0$, so $\Ric=0$.
The rest of the proof is identical to that of Theorem~\ref{thm3}, leading to the same contradiction.
\end{proof}

\section{Examples}
\label{sec:examples}

We present three examples.
The first is a non‑trivial local solution of the GQE equations on a negatively curved warped product, with non‑constant $v$, that explicitly verifies Lemma~\ref{lem:key}.
The second confirms the algebraic identity on a flat local model where $v$ is constant.
The third provides a rigorous analysis of complete examples, showing that no complete flat example with $\lambda>0$ exists and that the natural negatively curved warped products fail the positivity condition globally.

\begin{example}[A non‑trivial local verification of Lemma~\ref{lem:key}]\label{ex:nontrivial}
Let $n=3$, $m=5>1$, $\gamma=1$, and consider the warped product metric
\[
g = dr^{2} + e^{2r}(dx^{2}+dy^{2})
\]
on the interval $r\in(\ln(6/7),\,\ln 3)$.
The scalar curvature of this metric is constant, $R = -n(n-1)\gamma^{2} = -6$, and the fiber is flat ($\kappa=0$).
Define
\[
f(r) = -5\ln(3-e^{r}),\qquad
\lambda(r) = \frac{5e^{r}}{3-e^{r}} - 2.
\]
Because $3-e^{r}>0$ on the chosen interval, $f$ is smooth.
Moreover $\lambda(r)>0$ precisely when $r>\ln(6/7)$ (since $\lambda(r)=0$ iff $5e^{r}=2(3-e^{r})$, i.e.\ $7e^{r}=6$, $r=\ln(6/7)$).
Thus on $(\ln(6/7),\ln 3)$ all the hypotheses $R\le0$, $\lambda>0$, $m>1$ are satisfied.

A direct computation shows that $(f,\lambda)$ solves the GQE equation
$\Ric + \Hess f - \frac{1}{5} df\otimes df = \lambda g$.
Indeed, $f' = y = \frac{5e^{r}}{3-e^{r}}$, and from the spherical component (with $\kappa=0$, $n=3$)
\begin{equation}
\frac{\psi'}{\psi} f' = \lambda + \frac{\psi''}{\psi} + (n-2)\frac{\psi'^{2} - \kappa}{\psi^{2}} \label{eq:spherical}
\end{equation}
we obtain $\lambda = f' - 2$, which matches our definition.
The radial component
\begin{equation}
f'' - \frac{1}{5}(f')^{2} = \lambda + (n-1)\frac{\psi''}{\psi} \label{eq:radial}
\end{equation}
becomes $f'' - \frac{1}{5}(f')^{2} = \lambda + 2$.
Since $f'' = \bigl(\frac{5e^{r}}{3-e^{r}}\bigr)' = \frac{15e^{r}}{(3-e^{r})^{2}}$, we have
\[
f'' - \frac{1}{5}(f')^{2}
= \frac{15e^{r}}{(3-e^{r})^{2}} - \frac{25e^{2r}}{5(3-e^{r})^{2}}
= \frac{15e^{r} - 5e^{2r}}{(3-e^{r})^{2}}
= \frac{5e^{r}}{3-e^{r}} = \lambda+2,
\]
confirming the GQE system.

Now set $v = e^{-f/5}\lambda$.
One finds
\[
v = e^{\ln(3-e^{r})}\Bigl(\frac{5e^{r}}{3-e^{r}} - 2\Bigr)
= (3-e^{r})\frac{5e^{r} - 2(3-e^{r})}{3-e^{r}} = 5e^{r} - 2(3-e^{r}) = 7e^{r} - 6,
\]
which is positive and non‑constant on $(\ln(6/7),\ln 3)$.
The Laplacian of a radial function on this warped product is
$\Delta v = v'' + 2\frac{\psi'}{\psi}v'$, where $\psi=e^{r}$.
Hence $v' = v'' = 7e^{r}$, and
\[
\Delta v = 7e^{r} + 2\cdot 1 \cdot 7e^{r} = 21e^{r}.
\]
Now compute the left‑hand side of \eqref{eq:key}:
\[
e^{f/5} = \frac{1}{3-e^{r}},\qquad
e^{f/5}\Delta v = \frac{21e^{r}}{3-e^{r}}.
\]
For the right‑hand side, $\hat{\Ric}=0$ (the metric is Einstein) and $R=-6$.
Moreover $n\lambda - R = 3\lambda + 6 = \frac{15e^{r}}{3-e^{r}}$.
Thus
\begin{align*}
\frac{1}{n-1}\Bigl(\frac{m-1}{m}|\hat{\Ric}|^{2} -
\frac{R(n\lambda-R)}{n}\bigl(1+\tfrac{n-1}{m}\bigr)\Bigr)
&= \frac{1}{2}\Bigl(0 - \frac{(-6)\cdot\frac{15e^{r}}{3-e^{r}}}{3}\bigl(1+\tfrac{2}{5}\bigr)\Bigr)\\
&= \frac{1}{2}\Bigl( \frac{6\cdot 15e^{r}}{3(3-e^{r})}\cdot \frac{7}{5} \Bigr)\\
&= \frac{1}{2}\cdot \frac{42e^{r}}{3-e^{r}}
= \frac{21e^{r}}{3-e^{r}},
\end{align*}
which matches the left‑hand side.
This explicit computation verifies the algebraic identity \eqref{eq:key} in a situation where $v$ is not constant and $R<0$.

We note that this example cannot be extended to a complete manifold with $\lambda>0$ everywhere, because $\lambda(r)$ becomes negative for $r<\ln(6/7)$ and the solution blows up at $r=\ln 3$.
This illustrates that while Lemma~\ref{lem:key} holds locally in great generality, global solutions with the required positivity may be highly restricted --- a theme explored in the next examples.
\end{example}

\begin{example}[Local model on a Euclidean ball]\label{ex:local}
Let $n\ge 3$, $m>1$ and $\tau>0$.
On flat $\mathbb{R}^{n}$, consider the ball $\Omega = \{x\in\mathbb{R}^{n}:|x|^{2} < 2m\tau\}$.
Define
\[
f(x) = -m\ln\!\Bigl(\tau - \frac{|x|^{2}}{2m}\Bigr),\qquad
\lambda(x) = \frac{1}{\tau - \frac{|x|^{2}}{2m}}.
\]
Write $r^{2}=|x|^{2}$ and $\rho = \tau - r^{2}/(2m)$.
A direct computation gives
\[
f_{i} = \frac{x_{i}}{\rho},\qquad
f_{ij} = \frac{\delta_{ij}}{\rho} + \frac{x_{i}x_{j}}{m\rho^{2}}.
\]
Hence
\[
\Hess f - \frac{1}{m} df\otimes df
= \Bigl(\frac{1}{\rho} + \frac{r^{2}}{m\rho^{2}}\Bigr)g_{ij} - \frac{r^{2}}{m\rho^{2}}g_{ij}
= \frac{1}{\rho}\,g_{ij} = \lambda g_{ij}.
\]
The metric is flat, so $\Ric=0$ and $R=0$.
One easily checks $v = e^{-f/m}\lambda = 1$; thus $\nabla v=0$ and $\Delta v=0$.
The right‑hand side of \eqref{eq:key} becomes $\frac{1}{n-1}(0-0)=0$, confirming the algebraic identity.
This example, although defined only on a ball, illustrates the precise cancellation of the gradient terms and shows the consistency of the local geometry when $v$ is constant.
\end{example}

\begin{example}[Complete examples: non‑existence on flat space, failure in warped models]\label{ex:nonex}
We investigate whether complete, non‑compact gradient generalized $m$‑quasi‑Einstein manifolds with constant $R\le 0$, $\lambda>0$ and $m>1$ can exist, beyond the Euclidean space itself.

\medskip
\noindent\textbf{Case 1: Flat $\mathbb{R}^{n}$.}
Let $(\mathbb{R}^{n}, g_{\text{std}})$ be the Euclidean space.
Assume there exist smooth functions $f,\lambda$ with $\lambda>0$ satisfying
\begin{equation}
\Hess f - \frac1m df\otimes df = \lambda\,g. \label{eq:flat}
\end{equation}
Set $\phi = e^{-f/m} > 0$.
A direct calculation using \eqref{eq:flat} gives
\begin{equation}
\Hess \phi = -\frac{1}{m}e^{-f/m}\bigl(\Hess f - \frac1m df\otimes df\bigr)
= -\frac{v}{m}\,g, \label{eq:Hessphi}
\end{equation}
where $v = e^{-f/m}\lambda > 0$.
Thus $\Hess \phi = -\frac{v}{m}g$ is negative definite everywhere, so $\phi$ is a smooth concave function on $\mathbb{R}^n$.
Consequently $-\phi$ is a convex function bounded above by $0$ (since $\phi>0$).
A classical Liouville-type theorem (see e.g.\ \cite[Theorem~2.1]{Pigola}) forces $-\phi$ to be constant; hence $\phi$ is constant.
From \eqref{eq:Hessphi} we then obtain $v = -m\,\phi^{-1}\Delta\phi = 0$, which contradicts $\lambda>0$.
Therefore, \textbf{no complete flat gradient $m$‑quasi‑Einstein manifold with $\lambda>0$ exists} (apart from the trivial vacuum case $\lambda=0$ which is excluded by hypothesis).

\medskip
\noindent\textbf{Case 2: Warped products with constant negative scalar curvature.}
A natural non‑Euclidean setting is a complete manifold with $R<0$.
Consider the warped product metric on $M=\mathbb{R}^{n}$
\[
g = dr^{2} + \psi(r)^{2} g_{N},
\]
where $(N,g_{N})$ is an $(n-1)$-dimensional Einstein manifold with $\Ric_{N} = (n-2)\kappa\,g_{N}$, $\kappa\in\{-1,0,1\}$, and $\psi(r)>0$ is smooth on $[0,\infty)$ with $\psi(0)=0$, $\psi'(0)=1$ (to ensure smoothness at the origin).
The standard complete metrics of constant negative scalar curvature $R = -n(n-1)\gamma^{2}$ ($\gamma>0$) are obtained by:
\[
\psi(r) = \frac{\sinh(\gamma r)}{\gamma}\quad (\kappa=1),\qquad
\psi(r) = e^{\gamma r}\quad (\kappa=0),\qquad
\psi(r) = \frac{\cosh(\gamma r)}{\gamma}\quad (\kappa=-1).
\]
We search for radial functions $f = f(r)$, $\lambda = \lambda(r)$ satisfying the GQE equation
\[
\Ric + \Hess f - \frac1m df\otimes df = \lambda g.
\]
The Ricci tensor of $g$ is given by
\[
\Ric(\partial_{r},\partial_{r}) = -(n-1)\frac{\psi''}{\psi},\qquad
\Ric(X,X) = \Bigl( -\frac{\psi''}{\psi} - (n-2)\frac{\psi'^{2} - \kappa}{\psi^{2}} \Bigr) g(X,X)
\]
for $X\perp\partial_{r}$.
The Hessian of a radial function is
\[
\Hess f = f''\,dr^{2} + \psi\psi' f' \,g_{N} = f''\,dr^{2} + \frac{\psi'}{\psi} f'\, (g - dr^{2}).
\]
Decomposing the GQE equation into radial and spherical components yields
\begin{align}
f'' - \frac1m (f')^{2} &= \lambda + (n-1)\frac{\psi''}{\psi}, \label{eq:radial2}\\
\frac{\psi'}{\psi} f' &= \lambda + \frac{\psi''}{\psi} + (n-2)\frac{\psi'^{2} - \kappa}{\psi^{2}}. \label{eq:spherical2}
\end{align}
Eliminating $\lambda$ gives a single ODE for $y = f'$:
\begin{equation}
y' - \frac1m y^{2} - \frac{\psi'}{\psi} y = (n-2)\Bigl( \frac{\psi''}{\psi} - \frac{\psi'^{2} - \kappa}{\psi^{2}} \Bigr). \label{eq:ODE}
\end{equation}
For each of the three standard $\psi$'s one checks directly that
\[
\frac{\psi''}{\psi} = \gamma^{2},\qquad \frac{\psi'^{2} - \kappa}{\psi^{2}} = \gamma^{2},
\]
so the right‑hand side of \eqref{eq:ODE} vanishes identically. Thus the ODE reduces to
\[
y' - \frac1m y^{2} - \frac{\psi'}{\psi} y = 0.
\]
This Bernoulli equation is solved by setting $u = y^{-1}$, giving
\[
u' + \frac{\psi'}{\psi} u = -\frac1m,
\]
with integrating factor $\psi$. Hence
\[
(\psi u)' = -\frac{\psi}{m},\qquad
\psi u = -\frac1m \int \psi\,dr + C.
\]
Thus
\[
y = \frac{\psi}{C - \frac1m \int \psi\,dr}.
\]

We now examine $\lambda$ given by \eqref{eq:spherical2}:
\[
\lambda = \frac{\psi'}{\psi} y - \gamma^{2} - (n-2)\gamma^{2} = \frac{\psi'}{\psi} y - (n-1)\gamma^{2}.
\]
Now we evaluate each case separately.

\begin{itemize}
\item $\kappa=1$, $\psi = \frac{\sinh(\gamma r)}{\gamma}$.
Then $\int \psi\,dr = \frac{1}{\gamma^{2}}\cosh(\gamma r)$, $\frac{\psi'}{\psi} = \gamma\coth(\gamma r)$.
Set $D = m\gamma^{2} C$. Then
\[
C - \frac{1}{m\gamma^{2}}\cosh(\gamma r) = \frac{1}{m\gamma^{2}}(D - \cosh(\gamma r)),
\]
hence
\[
y = \frac{\frac{\sinh(\gamma r)}{\gamma}}{\frac{1}{m\gamma^{2}}(D - \cosh(\gamma r))}
= \frac{m\gamma \sinh(\gamma r)}{D - \cosh(\gamma r)}.
\]
Then
\[
\frac{\psi'}{\psi} y = \gamma\coth(\gamma r) \cdot \frac{m\gamma \sinh(\gamma r)}{D - \cosh(\gamma r)}
= \frac{m\gamma^{2} \cosh(\gamma r)}{D - \cosh(\gamma r)}.
\]
Thus
\[
\lambda(r) = \frac{m\gamma^{2} \cosh(\gamma r)}{D - \cosh(\gamma r)} - (n-1)\gamma^{2}.
\]
As $r\to\infty$, $\cosh\to\infty$, so $D - \cosh \sim -\cosh$, and the fraction tends to $-m\gamma^{2}$, giving $\lambda \to -(m+n-1)\gamma^{2} < 0$.
Hence $\lambda$ becomes negative for large $r$, violating $\lambda>0$ on the whole manifold.

\item $\kappa=0$, $\psi = e^{\gamma r}$.
Then $\int \psi\,dr = \frac{1}{\gamma}e^{\gamma r}$, $\frac{\psi'}{\psi} = \gamma$.
Set $D = m\gamma C$, then $C - \frac{1}{m\gamma}e^{\gamma r} = \frac{1}{m\gamma}(D - e^{\gamma r})$, so
\[
y = \frac{e^{\gamma r}}{\frac{1}{m\gamma}(D - e^{\gamma r})}
= \frac{m\gamma e^{\gamma r}}{D - e^{\gamma r}}.
\]
Therefore,
\[
\lambda = \gamma y - (n-1)\gamma^{2} = \frac{m\gamma^{2} e^{\gamma r}}{D - e^{\gamma r}} - (n-1)\gamma^{2}.
\]
As $r\to\infty$, $D - e^{\gamma r}\sim -e^{\gamma r}$, so the fraction tends to $-m\gamma^{2}$, giving $\lambda\to -(m+n-1)\gamma^{2}<0$.
As $r\to -\infty$, $e^{\gamma r}\to 0$, so the fraction tends to $0$, yielding $\lambda\to -(n-1)\gamma^{2}<0$.
Thus $\lambda(r)$ is negative for all $r$, and certainly never positive everywhere.

\item $\kappa=-1$, $\psi = \frac{\cosh(\gamma r)}{\gamma}$.
Then $\int \psi\,dr = \frac{1}{\gamma^{2}}\sinh(\gamma r)$, $\frac{\psi'}{\psi} = \gamma\tanh(\gamma r)$.
Set $D = m\gamma^{2} C$, then $C - \frac{1}{m\gamma^{2}}\sinh(\gamma r) = \frac{1}{m\gamma^{2}}(D - \sinh(\gamma r))$, giving
\[
y = \frac{\frac{\cosh(\gamma r)}{\gamma}}{\frac{1}{m\gamma^{2}}(D - \sinh(\gamma r))}
= \frac{m\gamma \cosh(\gamma r)}{D - \sinh(\gamma r)}.
\]
Thus
\[
\lambda = \gamma\tanh(\gamma r)\cdot \frac{m\gamma \cosh(\gamma r)}{D - \sinh(\gamma r)} - (n-1)\gamma^{2}
= \frac{m\gamma^{2} \sinh(\gamma r)}{D - \sinh(\gamma r)} - (n-1)\gamma^{2}.
\]
As $r\to\infty$, $\sinh(\gamma r)\to\infty$, so $D - \sinh \sim -\sinh$, the fraction tends to $-m\gamma^{2}$, hence $\lambda\to -(m+n-1)\gamma^{2}<0$.
At $r=0$, $\sinh(0)=0$, giving $\lambda(0) = 0 - (n-1)\gamma^{2} <0$.
Hence $\lambda$ is negative everywhere.
\end{itemize}

In all three hyperbolic models, $\lambda$ is either negative for all $r$ or becomes negative at infinity, and in no case is $\lambda>0$ globally.
Thus none of these complete non‑Euclidean spaces provides an example satisfying the hypotheses of our rigidity theorems.

\medskip
\noindent\textbf{Conclusion.}
The above analysis shows that complete flat examples with $\lambda>0$ do not exist, and the natural negatively curved warped products fail the positivity condition on $\lambda$.
Together with the local example \ref{ex:nontrivial}, this indicates that while Lemma~\ref{lem:key} is a universal algebraic fact, the global requirements $\lambda>0$ and completeness are highly restrictive.
The rigidity results proved in this paper are therefore far from vacuous; rather, they imply that any hypothetical complete non‑compact gradient $m$‑quasi‑Einstein manifold with $R\le 0$, $\lambda>0$, $m>1$ must necessarily violate all the additional integrability or curvature conditions stated in Theorems~\ref{thm1}--\ref{thm4}.
Whether such a manifold can be constructed remains an interesting open problem.
\end{example}

\end{document}